\newcommand {\bd} {\begin{displaymath}}
\newcommand {\ed} {\end{displaymath}}
\newcommand {\be} {\begin{equation}}
\newcommand {\ee} {\end{equation}}
\newcommand*{\Rom}[1]{\expandafter\@slowromancap\romannumeral #1@}
\newcommand {\bea} {\begin{eqnarray}}
\newcommand {\eea} {\end{eqnarray}}
\newcommand{\ts}{\textsuperscript}
\newcommand{\ga}{\alpha}
\newtheorem{theorem}{Theorem}
\newtheorem{proposition}{Proposition}
\newtheorem{example}{Example}
\newtheorem{remark}{Remark}
\newtheorem{definition}{Definition}
\def\g{\mathfrak{g}}
\begin{document}
\title{Characteristic and Coxeter polynomials for affine Lie algebras}

\author{Pantelis A.~Damianou}
\email{damianou@ucy.ac.cy}

\author{Charalampos A. Evripidou}
\email{cevrip02@ucy.ac.cy}
\thanks{
This work  was co-funded by the European Regional Development Fund
and the Republic of Cyprus through the Research Promotion Foundation (Project: PENEK/0311/30).
}

\address{Department of Mathematics and Statistics University of Cyprus
P.O.~Box 20537, 1678 Nicosia Cyprus}
\date{}

\begin{abstract}
	We compute the characteristic polynomials
	of affine Cartan, adjacency matrices and Coxeter polynomials
	of the associated Coxeter system using Chebyshev polynomials.
	We give explicit factorization of these polynomials as products
	of cyclotomic polynomials. Finally, we present several different methods
	of obtaining the exponents and Coxeter number for affine Lie algebras.
	In particular we compute the exponents and Coxeter number
	for each conjugacy class in the case of $A_n^{(1)}$.
\end{abstract}

\maketitle

\section{Introduction}

Complex finite dimensional semi-simple Lie algebras are classified via their root system.
All data of a root system is encoded in a matrix, the Cartan
matrix, or in a graph, the Dynkin diagram. This graph is constructed
along the same lines as the construction of the Coxeter graph from the Coxeter matrix.

A Cartan matrix is an $n \times n$-integer matrix $C$ which obeys
\begin{enumerate}
	\item[-] $C_{i,i}=2,$
	\item[-] $C_{i,j}=0 \Rightarrow C_{j,i}=0, \ \forall \, i,j$
	\item[-] $C_{i,j}\leq 0, \ \forall \, i\neq j$
	\item[-] $\det{C} >0.$
\end{enumerate}
Relaxing the last condition on Cartan
matrices we get the so called generalized Cartan matrices.
Generalized Cartan matrices are classified into three disjoint categories,
finite, affine and indefinite (see \cite{Kac} Chapter 4).
Finite are the usual Cartan matrices associated with complex semi-simple
finite dimensional Lie algebras, while
affine and indefinite, give rise to infinite dimensional Lie algebras.

For each generalized Cartan matrix the associated semi-simple complex Lie algebra $ \g $ is
algebraically generated by  $3n$  generators $\{x_i^{\pm},h_i:1\leq i \leq n\}$ which are
subjected to the relations

\begin{enumerate}
	\item $[h_i,h_j]=0 ,$
	\item $[h_i,x_j^{\pm}]=\pm C_{i,j}x_j^{\pm} , $
	\item $[x_i^{+},x_j^{-}]=\delta_{i,j}h_i ,$
	\item $(ad_{x_i^{\pm}})^{1-C_{j,i}}(x_j^{\pm}))=0 \ .$
\end{enumerate}
This set of relations is known as the  Chevalley-Serre relations.
An affine Cartan matrix is one for which  $\det{C}=0$ and each
proper principal minor of $C$ is positive. Thus each $(n-1)\times (n-1)$
submatrix of $C$ obtained by removing an $i\ts{th}$ row and $i\ts{th}$ column is a
Cartan matrix. Chevalley-Serre relations on affine Cartan matrices give rise to
affine Lie algebras. The main focus of this paper is on
affine Cartan matrices. This important subclass of generalized Cartan
matrices is characterized by the property that they are symmetrizable
and the corresponding symmetric matrices $DC$ are  positive semidefinite.
From now on $C$ will be an indecomposable finite or affine Cartan matrix.

With each Cartan or affine Cartan matrix $C$
we associate a graph, called the Dynkin diagram.
Its vertices $\{r_1,\ldots ,r_n \}$ correspond to the columns of $C$.
There are $C_{i,j}C_{j,i}$ edges between the vertices $r_i,r_j$ for $i\neq j$.
In case $C_{i,j} < C_{j,i}$ we put an arrow in the edge $(r_i,r_j)$
pointing to the vertex $r_j$. For an affine Cartan matrix it is customary to enumerate the
vertices as $\{r_0,r_1,\ldots,r_n\}$ so that the corresponding
Dynkin diagram has $n+1$ vertices.

\centerline{\bf Dynkin diagrams for affine Lie algebras}

$
	\put(25,20){\line(1,0){50}}
	\put(85,20){\line(1,0){50}}
	\put(205,20){\line(1,0){50}}
	\put(24.4,22){\line(4,1){115}}
	\put(255.6,22){\line(-4,1){115}}
	\put(20,20){\circle{10}}
	\put(80,20){\circle{10}}
	\put(140,20){\circle{10}}
	\put(200,20){\circle{10}}
	\put(260,20){\circle{10}}
	\put(140,50){\circle*{10}}
	\put(160,20){$\ldots$}
	\put(0,50){$\boldsymbol{A_{n}^{(1)}}$}
$

\vspace{-1ex}
$
	\put(25,42){\line(5,-2){50}}
	\put(25,-2){\line(5,2){50}}
	\put(85,20){\line(1,0){50}}
	\put(205,19){\line(1,0){50}}
	\put(205,21){\line(1,0){50}}
	\put(20.2,43.3){\circle{10}}
	\put(20.2,-3.3){\circle*{10}}
	\put(80,20){\circle{10}}
	\put(140,20){\circle{10}}
	\put(200,20){\circle{10}}
	\put(260,20){\circle{10}}
	\put(160,20){$\ldots$}
	\put(0,50){$\boldsymbol{B_{n}^{(1)}}$}
	\put(227,17.5){$>$}
$

\vspace{3ex}
$
	\put(25,19){\line(1,0){50}}
	\put(25,21){\line(1,0){50}}
	\put(85,20){\line(1,0){50}}
	\put(205,19){\line(1,0){50}}
	\put(205,21){\line(1,0){50}}
	\put(20,20){\circle*{10}}
	\put(80,20){\circle{10}}
	\put(140,20){\circle{10}}
	\put(200,20){\circle{10}}
	\put(260,20){\circle{10}}
	\put(160,20){$\ldots$}
	\put(0,30){$\boldsymbol{C_{n}^{(1)}}$}
	\put(227,17.5){$<$}
	\put(47,17.5){$>$}
$

\vspace{-1ex}
$
	\put(25,42){\line(5,-2){50}}
	\put(25,-2){\line(5,2){50}}
	\put(85,20){\line(1,0){50}}
	\put(205,20){\line(1,0){50}}
	\put(263.5,23.5){\line(5,2){50}}
	\put(263.5,16.5){\line(5,-2){50}}
	\put(20.2,43.3){\circle{10}}
	\put(20.2,-3.3){\circle*{10}}
	\put(80,20){\circle{10}}
	\put(140,20){\circle{10}}
	\put(200,20){\circle{10}}
	\put(260,20){\circle{10}}
	\put(319,45){\circle{10}}
	\put(319,-5){\circle{10}}
	\put(160,20){$\ldots$}
	\put(0,50){$\boldsymbol{D_{n}^{(1)}}$}
$

\vspace{3ex}
$
	\put(25,20){\line(1,0){50}}
	\put(85,20){\line(1,0){50}}
	\put(205,20){\line(1,0){50}}
	\put(145,20){\line(1,0){50}}
	\put(140,15){\line(0,-1){25}}
	\put(140,-20){\line(0,-1){25}}
	\put(20,20){\circle{10}}
	\put(80,20){\circle{10}}
	\put(140,20){\circle{10}}
	\put(200,20){\circle{10}}
	\put(260,20){\circle{10}}
	\put(140,-15){\circle{10}}
	\put(140,-50){\circle*{10}}
	\put(0,30){$\boldsymbol{E_{6}^{(1)}}$}
$

\vspace{2ex}
$
	\put(25,20){\line(1,0){30}}
	\put(65,20){\line(1,0){30}}
	\put(105,20){\line(1,0){30}}
	\put(145,20){\line(1,0){30}}
	\put(185,20){\line(1,0){30}}
	\put(225,20){\line(1,0){30}}
	\put(140,15){\line(0,-1){25}}
	\put(20,20){\circle*{10}}
	\put(60,20){\circle{10}}
	\put(100,20){\circle{10}}
	\put(140,20){\circle{10}}
	\put(180,20){\circle{10}}
	\put(220,20){\circle{10}}
	\put(260,20){\circle{10}}
	\put(140,-15){\circle{10}}
	\put(0,30){$\boldsymbol{E_{7}^{(1)}}$}
$

\vspace{3ex}
$
	\put(25,20){\line(1,0){25}}
	\put(60,20){\line(1,0){25}}
	\put(95,20){\line(1,0){25}}
	\put(130,20){\line(1,0){25}}
	\put(165,20){\line(1,0){25}}
	\put(200,20){\line(1,0){25}}
	\put(195,15){\line(0,-1){20}}
	\put(235,20){\line(1,0){25}}
	\put(20,20){\circle*{10}}
	\put(55,20){\circle{10}}
	\put(90,20){\circle{10}}
	\put(125.5,20){\circle{10}}
	\put(160,20){\circle{10}}
	\put(195.5,20){\circle{10}}
	\put(195.5,-10){\circle{10}}
	\put(230,20){\circle{10}}
	\put(265.5,20){\circle{10}}
	\put(0,30){$\boldsymbol{E_{8}^{(1)}}$}
$

\vspace{2ex}
$
	\put(25,0){\line(1,0){50}}
	\put(85,0){\line(1,0){50}}
	\put(205,0){\line(1,0){50}}
	\put(145,1){\line(1,0){51}}
	\put(145,-1){\line(1,0){51}}
	\put(20,0){\circle*{10}}
	\put(80,0){\circle{10}}
	\put(140,0){\circle{10}}
	\put(200,0){\circle{10}}
	\put(260,0){\circle{10}}
	\put(0,10){$\boldsymbol{F_{4}^{(1)}}$}
	\put(164,-2.6){$>$}
$

\vspace{2ex}
$
	\put(85,0){\line(1,0){50}}
	\put(145,0){\line(1,0){50}}
	\put(145,1.5){\line(1,0){50.5}}
	\put(145,-1.5){\line(1,0){50.5}}
	\put(140,0){\circle{10}}
	\put(200,0){\circle{10}}
	\put(80,0){\circle*{10}}
	\put(0,15){$\boldsymbol{G_{2}^{(1)}}$}
	\put(164,-2.6){$>$}
$

\newpage

For a Dynkin diagram $\Gamma$, in addition to the Cartan matrix $C$,
we  associate the \textit{ Coxeter adjacency
matrix}
which is  the matrix $A=2I-C$.
The \textit{characteristic polynomial} of $\Gamma$ is that of $A$
and the \textit{spectral radius} of $\Gamma$ is
\bd
	\rho \left(\Gamma\right)= \max \left\{\left|\lambda\right|:
	\lambda  \text{ is an eigenvalue of } A \right\}.
\ed

In this paper we use the following notation.
The subscript $n$  in all cases is equal to the degree of the
polynomial except that $Q_n(x)$ is of degree $2n$.

\begin{itemize}
	\item[-] $p_n(x)$ will denote the characteristic polynomial of the Cartan matrix,
	\item[-] $q_n(x)=\det{(2xI+A)}$,
	\item[-] $a_n(x)=q_n\left(\frac{x}{2}\right)$ will denote the characteristic polynomial of $-A$ and finally,
	\item[-] $Q_n(x)=x^n a_n \left( x + \frac{1}{x} \right). $
\end{itemize}
Note the relation between the polynomials $a_n, \ q_n$ and $p_n$:
\bd
	p_n(x)=a_n(x-2)=q_n\left(\frac{x}{2}-1\right).
\ed

We prove the following result:

\begin{theorem}
Let $C$ be the $n \times n$  affine Cartan matrix of an affine Lie algebra of type $X$.
Then $q_{n}$ is a polynomial related to Chebyshev polynomials as follows
\begin{gather*}
\text{for } X=A_{n-1}^{(1)}, \ \ q_{n}(x)= 2  \left( T_{n}(x) + (-1)^{n-1}   \right),\\
\text{for } X=B_{n-1}^{(1)}, \ \ q_{n}(x)= 2  \left( T_{n}(x)-T_{n-4}(x)   \right),\\
\text{for } X=C_{n-1}^{(1)}, \ \ q_{n}(x)= 2  \left( T_{n}(x)-T_{n-2}(x)   \right)\text{ and }\\
\text{for } X=D_{n-1}^{(1)}, \ \ q_{n}(x)=8x^2\left( T_{n-2}(x)-T_{n-4}(x)   \right),\\
\end{gather*}
where $T_n(x)$ is the $n$\ts{th} Chebyshev polynomial of first kind.
\end{theorem}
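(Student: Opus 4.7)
The plan is to handle all four cases by iterating a single cofactor-expansion identity along leaf vertices of the Dynkin diagram, reducing $q_n(x) = \det(2xI + A)$ first to determinants of smaller, decorated paths, and then to Chebyshev polynomials via well-known identities.

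The key preliminary is the following \emph{leaf-removal identity}: if $v$ is a leaf of the diagram with unique neighbor $w$, and $A_{vw}A_{wv} = c$ (so $c = 1$ for a single bond and $c = 2$ for a double bond), then
\bd
\det(2xI + A_G) = 2x \det(2xI + A_{G-v}) - c \det(2xI + A_{G-v-w}).
\ed
This is obtained by expanding along the row of $v$ and then along the $v$-column of the resulting minor. Iterating with $c = 1$ on the simple path $P_k$ recovers $\det(2xI + A_{P_k}) = U_k(x)$, the Chebyshev polynomial of the second kind. For $X = A^{(1)}_{n-1}$, the circulant Cartan matrix gives eigenvalues of $A$ equal to $2\cos(2\pi k/n)$, so the classical factorization $\prod_{k=0}^{n-1}(y - 2\cos(2\pi k/n)) = 2T_n(y/2) - 2$ with $y = -2x$ yields $q_n(x) = 2T_n(x) + 2(-1)^{n-1}$ after invoking $T_n(-x) = (-1)^n T_n(x)$.

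For $B^{(1)}_{n-1}$, $C^{(1)}_{n-1}$, $D^{(1)}_{n-1}$ I introduce two auxiliary polynomials: $R_k(x)$ for a $k$-vertex path with a double bond at one end, and $F_k(x)$ for a $k$-vertex path with a pair of leaves (``fork'') at one end. One application of the leaf identity to each, combined with the Chebyshev identity $U_m - U_{m-2} = 2T_m$, gives $R_k(x) = 2xU_{k-1} - 2U_{k-2} = 2T_k$ and $F_k(x) = 2x(U_{k-1} - U_{k-3}) = 4xT_{k-1}$. A second application of the leaf identity at the remaining decorated end of each affine diagram then gives: for $C^{(1)}_{n-1}$, $q_n = 2xR_{n-1} - 2R_{n-2} = 4xT_{n-1} - 4T_{n-2}$; for $B^{(1)}_{n-1}$, $q_n = 2xR_{n-1} - 2xR_{n-3} = 4x(T_{n-1} - T_{n-3})$; and for $D^{(1)}_{n-1}$, $q_n = 2xF_{n-1} - 2xF_{n-3} = 8x^2(T_{n-2} - T_{n-4})$. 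The claimed forms $2(T_n - T_{n-2})$, $2(T_n - T_{n-4})$, and $8x^2(T_{n-2} - T_{n-4})$ then drop out of repeated use of $2xT_k = T_{k+1} + T_{k-1}$.

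The main obstacle is purely one of bookkeeping: selecting which leaf to expand first so that $G - v$ lands on a decorated path whose polynomial has already been computed, and tracking the multiplicity factor $c$ correctly each time a double bond is involved. A small-$n$ sanity check (for example at $n = 3$ for $C^{(1)}_2$, or at $n = 4$ for $B^{(1)}_3$) guards against sign errors and is enough to confirm the final formulas.
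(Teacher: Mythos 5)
Your proposal is correct and follows essentially the same route as the paper: the paper's proof is precisely ``expanding the determinant,'' and your leaf-removal identity reproduces its intermediate expressions verbatim ($q_n=4x(T_{n-1}-T_{n-3})$ for $B^{(1)}_{n-1}$, $q_n=4(xT_{n-1}-T_{n-2})$ for $C^{(1)}_{n-1}$, and $q_n=2x\hat q_{n-1}-2x\hat q_{n-3}$ with $\hat q_k=4xT_{k-1}$ for $D^{(1)}_{n-1}$), after which the same identities $U_m-U_{m-2}=2T_m$ and $2xT_k=T_{k+1}+T_{k-1}$ finish the job. The only (harmless) deviation is in the $A^{(1)}_{n-1}$ case, where you diagonalize the circulant instead of expanding the cyclic determinant to get $2xU_{n-1}-2U_{n-2}+2(-1)^{n-1}$; both yield $2\left(T_n(x)+(-1)^{n-1}\right)$.
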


%

Using the fact that for bipartite Dynkin diagrams the spectrum  of $A$
is  the same as the spectrum of $-A$ it follows easily that the eigenvalues of the
Cartan matrix occur in pairs $\lambda $ and $4-\lambda$
(see e.g. \cite{Moody,Coxeter2,Dragos}).
In our case this happens in all cases except for $A_{n}^{(1)} ,\ n$ even.
In the bipartite cases, $a_n(x)$ is the characteristic polynomial of the Coxeter adjacency matrix.

Let $\g$ be a complex finite dimensional simple Lie algebra with Cartan matrix $C$ of
rank $n$ and simple roots $\Pi=\{\ga_1,\ldots,\ga_n\}$.
The Killing form on $\g$ induces an inner product on the real vector space $V$ with basis $\Pi$.
The Weyl group $W$ of $\g$ is a subgroup of $\operatorname{Aut}{V}$ which is generated by
reflections on $V$. Namely, for each root $\ga_i$ consider the reflection $\sigma_i$
through the hyperplane perpendicular to $\ga_i$
$$
\sigma_i : V \longrightarrow V , \ \ \ga \mapsto \ga-2\dfrac{(\ga,\ga_i)}{(\ga_i,\ga_i)}\ga_i.
$$
Then the Weyl group of $\g$ is $W=\langle \sigma_1,\sigma_2,\ldots,\sigma_n\rangle$.
The Cartan matrix $C$ satisfies $C_{i,j}=2\dfrac{(\ga_i,\ga_j)}{(\ga_j,\ga_j)}$
and therefore $ \sigma_i (\ga_j)=\ga_j-C_{j,i}\ga_i$.
The Weyl group of an affine Lie algebra with Cartan matrix $C$ of rank $n-1$ is
$W=\langle\sigma_1,\sigma_2,\ldots,\sigma_n\rangle$ where
$$
\sigma_i (\ga_j)=\ga_j-C_{j,i}\ga_i,
$$
is a ``reflection'' in the real vector space with basis $\{\ga_1,\ldots,\ga_n\}$.
If $z=(z_1,\ldots,z_n)$ is a left zero eigenvector of $C$
($z$ can be taken to be in  $ \mathbb{Z}^{n}$, see \cite{Kac}) and
$\ga=\sum_{k=1}^n z_k\ga_k$ then $\sigma_i(\ga)=\ga, \ \forall i=1,\ldots,n$.
Thus the Weyl group $W$ acts on $\{k\ga:k \in \mathbb{Z}\}$ as the identity.

A Coxeter polynomial $f_n$ is the characteristic polynomial of
$\sigma_{\pi(1)}\sigma_{\pi(2)}\ldots\sigma_{\pi(n)}\in gl(V)$
for some $\pi\in S_n$. When the Dynkin diagram does not contain cycles
the Coxeter polynomial is uniquely defined and for bipartite Dynkin
diagrams is closely related to the polynomial $Q_n(x)$;
the polynomial $Q_n(x)$ turned out to be $Q_n(x)=f_n(x^2)$
(see \cite{Moody}). For the case of $A_{n-1}^{(1)}$
there are $\left\lfloor \frac{n}{2} \right\rfloor$ different Coxeter polynomials.
For $n$ even, $Q_n\left(\sqrt{x}\right)$ is one of the Coxeter polynomials,
the one corresponding to the largest conjugacy class of the Coxeter transformations.
According to \cite{Coleman} the largest conjugacy class contains the Coxeter
transformations with the property that the set
$$
\left\{i:\pi^{-1}(i)>\pi^{-1}(i+1\pmod n), i=1,2,\ldots,n\right\}
$$
has the largest cardinality, i.e. contains $\frac{n}{2}$ elements.
For example the Coxeter transformation
$\sigma_1\sigma_3\ldots\sigma_{n-1}\sigma_2\sigma_4\ldots\sigma_n,$
which is the one considered in \cite{Moody}.

The roots of $Q_n$  are in the unit disk and therefore by a theorem of Kronecker
$Q_n(x)$ is a product of cyclotomic polynomials (see \cite{Damianou}).
We determine the factorization of $Q_n$ as a product of cyclotomic polynomials.
This factorization in turn determines  the factorization of $f_n$. The
irreducible factors of $Q_n$ are in one-to-one correspondence
with the irreducible factors of $a_n(x)$.

The roots of a Coxeter polynomial $f_n$, of affine type, are of the form
$e^{\frac{2m_j \pi i }{h}}$ where $ 0 \leq m_j \leq h$. The numbers $m_j$
are what we call \textit{affine exponents} and $h$ the \textit{affine Coxeter number}
associated with the Coxeter transformation $\sigma$.
These numbers are normally defined only for the bipartite case.
For $A_n^{(1)} , \ n$ odd one  defines them with
respect to the Coxeter polynomial corresponding to the largest conjugacy class
of the Coxeter elements. In this paper we also examine in detail the case of $A_n^{(1)}$
for $n$ both even and odd and we calculate the affine exponents and
affine Coxeter number for each conjugacy class.

These numbers are related to the Cartan matrix and give a universal formula
for the spectrum of the Dynkin diagram $\Gamma$
and the eigenvalues of the Cartan matrix.
For the bipartite case the spectrum is
$$
\left\{2 \cos{\frac{m_j \pi}{h}}: j=1,\ldots n \right\}
$$
and the eigenvalues of the Cartan matrix are
$\left\{4 \cos^2{\frac{m_j\pi}{2h}}: j=1,\ldots n \right\}$.

The affine exponents, affine Coxeter number of $X_{n}^{(1)}$ and the
roots of the corresponding simple Lie algebra $X_{n}$ are related in
an inquisitive way (see \cite{Moody}).
Let $\Pi=\{\ga_1,\ga_2,\ldots,\ga_{n}\}$ be the simple roots of
$X_n$, $V=\mathbb{R}\operatorname{-span}(\ga_1,\ga_2,\ldots,\ga_{n})$ and $\beta$
the branch root of $X_n$. Let $w_{\beta^\vee} \in V^*$ be the weight  corresponding to the
co root $\beta^\vee$. Then for some $c \in \mathbb N$ and a proper enumeration of $m_j$ we have
$$
c\cdot w_{\beta^\vee}=\displaystyle{\sum_{j=1}^{n}m_j\ga_j^\vee},
$$
where $c$ is the smallest integer such that $c\cdot w_{\beta^\vee}$
belongs to the co root lattice. The coefficient of $\beta^\vee$ is the affine
Coxeter number. Here we have identified $V$ with $V^*$  using  the inner product
induced by the Killing form. In this paper we show that this relation is valid
for all conjugacy classes in $A_n^{(1)}$. This case has not been considered previously. 

Steinberg, in his explanation of the MacKay correspondence,
showed in \cite{Steinberg} a mysterious relation between affine Coxeter polynomials
(for the simply laced Dynkin diagrams and later Stekolshchik in \cite{Steko}
for the multiple laced) and Coxeter polynomials of type $A_n$.
Each affine Coxeter polynomial is a product of Coxeter polynomials
of type $A_n$. From $X_n$ remove the branch root.
If $g(x)$ is the Coxeter polynomial of the reduced system then the Coxeter polynomial, $f(x)$
of $X^{(1)}_n$ is $f(x)=(x-1)^2g(x)$. The  affine exponents and affine Coxeter number
of an affine Lie algebra are easily computed using Steinberg's theorem.
In  table 1  we list  the affine exponents and affine
Coxeter number for affine Lie algebras.  Furthermore, we demonstrate
that the  method of Steinberg works also in the case of $A_n^{(1)}$ for $n$ even or odd a case which is not discussed in earlier literature.  

This paper is structured as follows.
In section 2 we give the definitions of Chebyshev and Cyclotomic polynomials and
some of their  properties. The polynomials $\Psi_n$ are also defined.
These are the irreducible factors of the polynomials
$a_n$ and are used to factor the polynomials $p_n$ and $q_n$.
In section 3 we compute and explicitly determine the
characteristic polynomials for  each affine Lie algebra.
In section 4 we define the affine Coxeter number
and affine exponents and we proceed in section 5 to determine them for each affine Lie algebra.
The exceptional affine Lie algebras are treated in section 6.
Section 7 deals with the method of Berman,  Lee and  Moody and the method
of Steinberg with special emphasis  to the case of $A_n^{(1)}$.

\section{Chebyshev and Cyclotomic polynomials}

\subsection{Chebyshev Polynomials}
A fancy way  to define Chebyshev polynomials of first ($T_n$) and second kind ($U_n$) is

\[
T_n\left(x\right)
=
\frac{1}{2}\cdot
\det{
\begin{pmatrix}
	2x&1&0&\cdots&0&0&0\\
	1&2x&1&\cdots&0&0&0\\
	0&1&2x&\cdots&0&0&0\\
	\vdots&\vdots&\vdots&\ddots&\vdots&\vdots&\vdots\\
	0&0&0&\cdots&2x&1&0\\
	0&0&0&\cdots&1&2x&2\\
	0&0&0&\cdots&0&1&2x
\end{pmatrix}
}
\]
and
\[
U_n\left(x\right)
=
\det{
\begin{pmatrix}
	2x&1&0&\cdots&0&0&0\\
	1&2x&1&\cdots&0&0&0\\
	0&1&2x&\cdots&0&0&0\\
	\vdots&\vdots&\vdots&\ddots&\vdots&\vdots&\vdots\\
	0&0&0&\cdots&2x&1&0\\
	0&0&0&\cdots&1&2x&1\\
	0&0&0&\cdots&0&1&2x
\end{pmatrix}
}.
\]
The first few polynomials are
\bd
\begin{array}{lcl}
	T_0(x)  & = &  1 \\
	T_1(x)  & = &  x \\
	T_2(x)  & = & 2x^2-1 \\
	T_3(x)  & = & 4x^3-3x \\
	T_ 4(x) & = & 8 x^4-8x^2+1 \\
	T_ 5(x) & = & 16x^5-20x^3+5x \\
	T_ 6(x) & = & 32x^6-48x^4+18 x^2 -1,
\end{array}
\ed
and
\bd
\begin{array}{lcl}
	U_0(x) & = & 1 \\
	U_1(x) & = & 2x \\
	U_2(x) & = & 4x^2-1 \\
	U_3(x) & = & 8x^3-4x \\
	U_4(x) & = & 16 x^4-12x^2+1 \\
	U_5(x) & = & 32x^5-32x^3+6x \\
	U_6(x) & = & 64x^6-80x^4+24 x^2 -1.
\end{array}
\ed
Expanding the determinants with respect to the first row we obtain the recurrence
\[F_{n+1}=2xF_n-F_{n-1}.\]
For the initial values $F_0=1,\;F_1=x$ and $F_0=1,\;F_1=2x$ we obtain the Chebyshev
polynomials of first and second kind respectively.

For $x=\cos \theta$, the trigonometric identities
\[
2x \cos{n\theta}=\cos{(n+1)\theta} + \cos{(n-1)\theta}
\]
and
\[
2x \sin{(n+1)\theta}=\sin{(n+2)\theta} + \cos{n\theta}
\]
give
\[
T_n(x)=\cos{n\theta}, \ U_n(x)=\frac{\sin{(n+1)\theta}}{\sin{\theta}}.
\]

We list some properties of Chebyshev polynomials that can be easily proved by induction.

\noindent
The polynomials $T_n$ satisfy:
\bd
\begin{array}{lcl}
	T_n(-x)&=&(-1)^n T_n(x)\\
	T_n(1)&=&1 \\
	T_{2n}(0)&=&(-1)^n\\
	T_{2n-1}(0)&=&0,
\end{array}
 \ed
while polynomials $U_n$ satisfy:
\bd
\begin{array}{lcl}
	U_n(-x)&=&(-1)^n U_n(x)\\
	U_n(1)&=&n+1 \\
	U_{2n}(0)&=&(-1)^n\\
	U_{2n-1}(0)&=&0.
\end{array}
 \ed
In addition (see \cite{Mason}%
)
\bd
	T_n(x)=2^{n-1} \prod_{j=1}^n \left[ x-\cos \left( \frac{(2j-1) \pi}{2n}\right) \right]
\ed
and
\bd
	U_n(x)=2^n \prod_{j=1}^n \left[ x-\cos \left( \frac{j\pi}{n+1}\right) \right].
\ed

There are some explicit formulas in powers of $x$ which are used in propositions
\ref{charant}, \ref{charbnt}, \ref{charcnt} and \ref{chardnt}.
\be
\label{Tproduct}
	T_n(x)=n \sum_{j=0}^n (-2)^j \frac{ (n+j-1)!}{ (n-j)! (2j)!} (1-x)^j    \ \ (n>0),
\ee
and
\be
\label{Uproduct}
	U_n(x)=\sum_{j=0}^n (-2)^j  \binom{ n+j+1}{ 2j+1} (1-x)^j.
\ee

\subsection{Cyclotomic Polynomials}

A complex number $\omega$ of order $n$ is called a  primitive $n$\ts{th} root of unity,
e.g. $e^{\frac{2 \pi i}{n}}$ is a primitive $n$\ts{th} root of unity. If $\omega$ is a
primitive $n$\ts{th} root of unity then $\omega^k$ is a primitive $n$\ts{th} root of unity
if and only if $\gcd(n,k)=1$. Since $e^{\frac{2 \pi i}{n}}$ produces all $n$\ts{th} roots of unity
it follows that there are exactly $\phi(n)$ primitive $n$\ts{th} roots of unity where $\phi$ is
Euler's totient function. Primitive $n$\ts{th} roots of unity are conjugate algebraic integers.

Let $\omega$ be a primitive $n$\ts{th} root of unity and $\Phi_n(x)$ its minimal polynomial. Then
\begin{displaymath}
	\Phi_n(x)=(x-\omega^{k_1})(x-\omega^{k_2}) \cdots (x-\omega^{k_{\phi(n)}}),
\end{displaymath}
where $1\leq k_1, k_2, \ldots, k_{\phi(n)} < n$ are the integers relatively prime to $n$.
The polynomial $\Phi_n(x) \in\mathbb Z[x]$ is the $n\ts{th}$ cyclotomic polynomial.
From the relation $\omega^{k\cdot n}-1=0$ it follows that $\Phi_n(x)|x^{k\cdot n}-1, \ \forall k\in\mathbb N$.
Actually
\[
x^n-1=\prod_{d|n}\Phi_d(x).
\]

Following  Lehmer \cite{Lehmer}, using cyclotomic polynomials
we can derive the minimal polynomials $\Psi_n$ of the algebraic integers
$2 \cos{\frac{2 k \pi}{n}}$, where $\gcd{(k,n)=1}$ (for $n\geq 2$). The polynomial $\Phi_n$,
being reciprocal (i.e. $\Phi_n(x)=x^{\phi(n)}\Phi_n\left(\frac{1}{x}\right)$),
it can be written in the form
\be
\label{Psi_n}
	\Phi_n(x)=x^{\frac{\phi(n)}{2}}\Psi_n\left(x+\frac{1}{x}\right),
\ee
for some monic irreducible polynomial $\Psi_n$ with integer coefficients
and degree half of that of $\Phi_n$. The
irreducibility of $\Psi_n$ is equivalent to the irreducibility of $\Phi_n$.

For $x=e^{\frac{2 k\pi i}{n}}$, a primitive $n$\ts{th} root of unity, we have that
\[
x+\frac{1}{x}=2 \cos{\frac{2 k \pi}{n}}.
\]
From equation \ref{Psi_n} we deduce that $2 \cos{\frac{2 k \pi}{n}}$ is a root of the irreducible
polynomial $\Psi_n$. Therefore $\Psi_n$ is the minimal polynomial of $2 \cos{\frac{2 k \pi}{n}}$.
Also equation \ref{Psi_n} can be used for the calculation of the polynomials $\Psi_n$.
For example, $\Psi_5(x)  = x^2+x-1$, $\Psi_9(x)  = x^3-3x+1$.

The roots of the polynomials $\Psi_n$ are
\[
2 \cos{\frac{2 k \pi}{n}},\; \ \text{where}\;\gcd(k,n)=1.
\]

\section{Affine Lie algebras}

\subsection{Cartan matrix of type \texorpdfstring{$A_n^{(1)}$}{An} }

The Cartan matrix of type $A_n^{(1)}$ is a matrix of the form
\be
\label{cartanant}
C_{A_n^{(1)}}=
\begin{pmatrix}
	2 & -1& 0 & \cdots &0 &0 & -1 \cr
	-1& 2 &-1 & \cdots &0 &0 & 0 \cr
	0 &-1 & 2 & \cdots &0 &0 & 0 \cr
	\vdots  &\vdots & \vdots & \ddots &\vdots &\vdots &\vdots \cr
	0 & 0 & 0 & \cdots &2 &-1 & 0 \cr
	0 & 0 & 0 & \cdots &-1& 2 &-1  \cr
	-1 & 0 & 0 & \cdots &0 &-1 & 2
\end{pmatrix}.
\ee

We list some formulas for the characteristic polynomial of the matrix  for
small values of $n$:
\bd
\begin{array}{rcl}
	p_3(x) & = & x^3-6 x^2+9x                             = x(x-3)^2\\
	p_4(x) & = & x^4-8x^3+20x^2-16x                       = x(x-4)(x-2)^2 \\
	p_5(x) & = & x^5-10 x^4+35 x^3 -50 x^2 +25x           = x(x^2-5x+5)^2 \\
	p_6(x) & = & x^6-12 x^5+54 x^4-112 x^3 +105 x^2 -36x  = x(x-4)(x-1)^2(x-3)^2 \\
	p_7(x) & = & x^7-14x^6+77x^5-210x^4+294x^3-196x^2+49x = \\
	& & x(x^3-7x^2+14x-7)^2.
\end{array}
\ed

We define a sequence of  polynomials in  the following way
\bd
q_n(x)=
\det
\begin{pmatrix}
	2x&  1& 0 & \cdots &0 &0 & 1 \\
	1 & 2x& 1 & \cdots &0 &0 & 0 \\
	0 & 1 & 2x& \cdots &0 &0 & 0 \\
	\vdots  &\vdots & \vdots & \ddots &\vdots &\vdots &\vdots \\
	0 & 0 & 0 & \cdots &2x& 1 & 0  \\
	0 & 0 & 0 & \cdots & 1& 2x& 1  \\
	1 & 0 & 0 & \cdots &0 & 1 & 2x
\end{pmatrix}.
\ed
By expanding the determinant we obtain the following formula for $q_n$
\begin{gather*}
	q_{n}(x)=2x U_{n-1}(x)-2 U_{n-2}(x)+2 (-1)^{n-1}=U_n(x)-U_{n-2}(x)+2(-1)^{n-1} =\\
	2 T_n(x)+2(-1)^{n-1}.
\end{gather*}
It is easy then to compute  the first few polynomials:
\bd
\begin{array}{lcl}
	q_3(x) & = & 8x^3-6x+2                   = 2(x+1)(2x-1)^2               \\
	q_4(x) & = & 16 x^4-16x^2                = 16x^2(x-1)(x+1)              \\
	q_5(x) & = & 32x^5-40x^3+10x+2           = 2(x+1)(4x^2-2x-1)^2          \\
	q_6(x) & = & 64x^6-96x^4+36 x^2-4        = 4(x+1)(x-1)(2x-1)^2(2x+1)^2  \\
	q_7(x) & = & 128 x^7-224x^5+112x^3-14x+2 = 2(x+1)(8x^3-4x^2-4x+1)^2     .
\end{array}
 \ed
For $n$ even the polynomial $q_n$ is divisible by $x-1$. Indeed
\bd
	q_n(1)=U_n(1)-U_{n-2}(1)+2 (-1)^{n-1}=(n+1)-(n-1)-2=0.
\ed

\begin{remark}
Note that
\begin{gather*}
	p_n(0)=q_n(-1)=U_n(-1)- U_{n-2}(-1)+2(-1)^{n-1} =\\
	(-1)^n U_n(1)- (-1)^{n-2} U_{n-2}(1)+2(-1)^{n-1} =0.
\end{gather*}
Therefore the determinant of $A_n^{(1)}$ is zero and $p_n$ is divisible by $x$.

\end{remark}

\begin{proposition}
\label{charant}
Let $p_n$ be the characteristic polynomial of the Cartan matrix (\ref{cartanant}).
Then
\bd
	p_n(x)= \sum_{j=1}^{n}  (-1)^{n+j} \frac{ 2n  (n+j-1)!}{ (n-j)! (2j)!}  x^j.
\ed

\end{proposition}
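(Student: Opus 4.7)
The plan is to use the two identities established earlier in this subsection, namely $p_n(x)=q_n(x/2-1)$ and $q_n(x)=2T_n(x)+2(-1)^{n-1}$, which together give
$$p_n(x) = 2T_n(x/2-1) + 2(-1)^{n-1}.$$
The task therefore reduces to expanding $T_n(x/2-1)$ as a polynomial in $x$, and for that the natural tool is the explicit formula (\ref{Tproduct}) for $T_n$ in powers of $(1-y)$.

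The expansion is cleanest if one first uses the parity property $T_n(-y)=(-1)^n T_n(y)$ to rewrite $T_n(x/2-1)=(-1)^n T_n(1-x/2)$. Applying (\ref{Tproduct}) with $y=1-x/2$ makes $1-y=x/2$, and the factor $(-2)^j(1-y)^j$ collapses to $(-2)^j(x/2)^j=(-1)^j x^j$. This gives
$$T_n(1-x/2)=n\sum_{j=0}^{n}(-1)^j\,\frac{(n+j-1)!}{(n-j)!\,(2j)!}\,x^j,$$
and, after multiplying by $(-1)^n$,
$$T_n(x/2-1)=n\sum_{j=0}^{n}(-1)^{n+j}\,\frac{(n+j-1)!}{(n-j)!\,(2j)!}\,x^j.$$

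The last step is to isolate the $j=0$ contribution: it equals $n\cdot(-1)^n\cdot\frac{(n-1)!}{n!\,0!}=(-1)^n$, so twice this term is $2(-1)^n=-2(-1)^{n-1}$, which cancels the extra constant $2(-1)^{n-1}$ coming from $q_n(x/2-1)$. What remains is the sum for $j\geq 1$, and absorbing the prefactor $2n$ yields exactly the claimed formula. This cancellation is also forced by the remark just before the proposition, which observes $p_n(0)=0$, and provides a useful consistency check. No step is really an obstacle; the proof is essentially a bookkeeping exercise with signs, and the only point requiring care is making sure the parity substitution is performed before (\ref{Tproduct}) is applied, so that the expansion is genuinely in powers of $x$ rather than in powers of $(4-x)$.
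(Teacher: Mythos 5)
Your proposal is correct and follows essentially the same route as the paper: combine $p_n(x)=q_n\left(\frac{x}{2}-1\right)$ with $q_n(x)=2T_n(x)+2(-1)^{n-1}$, use the parity identity to pass to $T_n\left(1-\frac{x}{2}\right)$, expand via formula (\ref{Tproduct}), and observe that the $j=0$ term cancels the constant $2(-1)^{n-1}$. The sign bookkeeping and the cancellation of the constant term match the paper's argument exactly.
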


\begin{proof}
Using the properties of Chebyshev polynomials it follows that
\bd
	\frac{1}{2} p_n(x)+(-1)^{n} =
	\frac{1}{2} q_n\left(\frac{x}{2}-1\right) +(-1)^{n} =
	T_n \left( \frac{x}{2}-1 \right)=
	(-1)^{n}T_n \left(1- \frac{x}{2} \right).
\ed
Using equation \ref{Tproduct} we have
\begin{gather*}
	T_n \left(1- \frac{x}{2} \right)=
	n \sum_{j=0}^n (-2)^j \frac{ (n+j-1)!}{ (n-j)! (2j)!}
	\left(1-\left(1-\frac{x}{2} \right) \right)^j=\\
	n \sum_{j=0}^n (-1)^j \frac{ (n+j-1)!}{ (n-j)! (2j)!} x^j.
\end{gather*}
Therefore
\[
	p_n(x)=\sum_{j=0}^{n}  (-1)^{n+j} \frac{ 2n  (n+j-1)!}{ (n-j)! (2j)!} x^j +2(-1)^{n-1}
\]
and the result follows.
\end{proof}

\subsection{Cartan matrix of type \texorpdfstring{$B_n^{(1)}$}{Bn} }

The Cartan matrix of type $B_n^{(1)}$ is a matrix of the form
\be
\label{cartanbnt}
C_{ B_n^{(1)}}=
\begin{pmatrix}
	2 & 0 &-1 & 0 &\cdots &0 &0 & 0 \\
	0 & 2 &-1 & 0 &\cdots &0 &0 & 0 \\
	-1 &-1 & 2 &-1 &\cdots &0 &0 & 0 \\
	0 & 0 &-1 & 2 &\cdots &0 &0 & 0 \\
	\vdots  &\vdots & \vdots & \vdots & \ddots &\vdots &\vdots &\vdots \\
	0 & 0 & 0 & 0 &\cdots &2 &-1 & 0 \\
	0 & 0 & 0 & 0 &\cdots &-1& 2 &-2 \\
	0 & 0 & 0 & 0 &\cdots &0 &-1& 2
\end{pmatrix}.
\ee
Using expansion on the first row it is easy to prove that
$\det\left(C_{ B_n^{(1)}}\right)=0$.

We list some formulas for the characteristic polynomial of the matrix  for small values of $n$:
\bd
\begin{array}{rcl}
	p_4(x) & = & x^4-8 x^3+20 x^2-16 x                         = x(x-4)(x-2)^2         \\
	p_5(x) & = & x^5-10 x^4+35 x^3 -50 x^2 +24 x               = x(x-1)(x-2)(x-3)(x-4) \\
	p_6(x) & = & x^6-12 x^5+54 x^4-112 x^3 +104 x^2 -32 x      =                       \\
	& & x(x-4)(x-2)^2(x^2-4 x+2)                                                       \\
	p_7(x) & = & x^7-14 x^6+77 x^5-210 x^4+293 x^3-190 x^2+40 x =                      \\
	& & x(x-2)(x-4)(x^2-5 x+5)(x^2-3 x+1).
\end{array}
\ed

Define the polynomial $q_n(x)=\det{(2xI+A)}$.
By expanding the determinant we obtain the following formula for $q_n$
\bd
	q_{n}(x)=4x\left(T_{n-1}(x)- T_{n-3}(x)\right) =8x\left(x^2-1\right)U_{n-3}(x).
\ed
Equivalently
\bd
	q_n(x)= 2 \left(T_n(x)- T_{n-4}(x) \right).
\ed
The first few polynomials are:
\bd
\begin{array}{lcl}
	q_4(x) & = & 16 x^4-16x^2             = 16x^2(x-1)(x+1)                    \\
	q_5(x) & = & 32x^5-40x^3+8x           = 8x(x-1)(2x+1)(2x-1)(x+1)           \\
	q_6(x) & = & 64x^6-96x^4+32 x^2       = 32x^2(x-1)(x+1)(2x^2-1)            \\
	q_7(x) & = & 128 x^7-224x^5+104x^3-8x =                                    \\
	& & 8x(x-1)(x+1)(4x^2-2x-1)(4x^2+2x-1).
\end{array}
\ed
We can easily compute the explicit form of the $p_n$ polynomial.
\begin{proposition} \label{charbnt}
Let $p_n(x)$ be the characteristic polynomial of the Cartan matrix (\ref{cartanbnt}).
Then
\bd
	p_n(x)=x(x-2)(x-4) \sum_{j=0}^{n-3} (-1)^{n+j+1}  \binom{ n+j-2}{ 2j+1} x^j.
\ed
\end{proposition}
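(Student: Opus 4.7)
The plan is to start from the relation $p_n(x)=q_n\!\left(\tfrac{x}{2}-1\right)$ already recorded in the notation, plug in the closed form for $q_n$ that has just been derived for type $B^{(1)}_{n-1}$, and then expand the resulting Chebyshev factor using formula (\ref{Uproduct}). No new structural idea is needed; the work is bookkeeping of signs and a small algebraic simplification of the pre-factor.

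First I would use the formula $q_n(x)=8x(x^2-1)U_{n-3}(x)$. Substituting $x\mapsto \tfrac{x}{2}-1$ and factoring the polynomial pre-factor gives
\[
8\left(\tfrac{x}{2}-1\right)\!\left(\left(\tfrac{x}{2}-1\right)^2-1\right)=x(x-2)(x-4),
\]
since $\left(\tfrac{x}{2}-1\right)^2-1=\tfrac{x(x-4)}{4}$. Therefore
\[
p_n(x)=x(x-2)(x-4)\,U_{n-3}\!\left(\tfrac{x}{2}-1\right),
\]
and it remains to identify $U_{n-3}\!\left(\tfrac{x}{2}-1\right)$ with the claimed polynomial sum.

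For this identification I would use the parity relation $U_m(-y)=(-1)^m U_m(y)$ to write
\[
U_{n-3}\!\left(\tfrac{x}{2}-1\right)=(-1)^{n-3}U_{n-3}\!\left(1-\tfrac{x}{2}\right),
\]
and then apply formula (\ref{Uproduct}) directly with $y=1-\tfrac{x}{2}$, so that $1-y=\tfrac{x}{2}$. This yields
\[
U_{n-3}\!\left(1-\tfrac{x}{2}\right)=\sum_{j=0}^{n-3}(-2)^{j}\binom{n+j-2}{2j+1}\left(\tfrac{x}{2}\right)^{j}=\sum_{j=0}^{n-3}(-1)^{j}\binom{n+j-2}{2j+1}x^{j}.
\]
Multiplying by $(-1)^{n-3}$ and observing $(-1)^{n-3+j}=(-1)^{n+j+1}$ gives the expression in the proposition. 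Assembling the pieces,
\[
p_n(x)=x(x-2)(x-4)\sum_{j=0}^{n-3}(-1)^{n+j+1}\binom{n+j-2}{2j+1}x^{j},
\]
as required.

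The only place where something could go wrong is the sign bookkeeping in the two sign-flipping steps (the parity of $U_{n-3}$ and the powers of $-2$ coming from (\ref{Uproduct})); one has to verify against the small-$n$ table of $p_n$ printed above, and indeed e.g.\ for $n=5$ the sum expands to $x-1$ so that $p_5(x)=x(x-2)(x-4)(x-1)\cdot(\text{sign})$, consistent with the listed factorization $x(x-1)(x-2)(x-3)(x-4)$ up to the $(x-3)$ factor coming from the $j=1$ term. Apart from this, no difficulty is anticipated.
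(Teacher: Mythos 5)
Your proposal is correct and follows essentially the same route as the paper: both start from $p_n(x)=q_n\left(\tfrac{x}{2}-1\right)$ with $q_n(x)=8x(x^2-1)U_{n-3}(x)$ and reduce the claim to evaluating $U_{n-3}\left(\tfrac{x}{2}-1\right)$ via the parity relation $U_m(-y)=(-1)^mU_m(y)$ together with formula (\ref{Uproduct}); you merely spell out the prefactor computation and sign bookkeeping that the paper leaves implicit. Only your closing sanity check is slightly garbled: for $n=5$ the sum is $x^2-4x+3=(x-1)(x-3)$, not $x-1$, which in fact matches the listed factorization exactly.
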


\begin{proof}
From $p_n(x)=q_n\left(\frac{x}{2}-1\right)$ and $q_{n}(x)=8x\left(x^2-1\right)U_{n-3}(x)$ we
only need to show that
\[
U_{n-3}\left(\frac{x}{2}-1 \right) = \sum_{j=0}^{n-3} (-1)^{n+j+1} \binom{ n+j-2}{ 2j+1} x^j,
\]
which is equation \ref{Uproduct} combined
with $U_n(-x)=(-1)^nU_n(x).$
\end{proof}

\subsection{Cartan matrix of type \texorpdfstring{$C_n^{(1)}$}{Cn} }

The Cartan matrix of type $C_n^{(1)}$ is a tri-diagonal matrix of the form
\be
\label{cartancnt}
C_{C_n^{(1)}} =
\begin{pmatrix}
	2 & -2& 0 & \cdots &0 &0 & 0 \\
	-1& 2 &-1 & \cdots &0 &0 & 0 \\
	0 &-1 & 2 & \cdots &0 &0 & 0 \\
	\vdots  &\vdots & \vdots & \ddots &\vdots &\vdots &\vdots \\
	0 & 0 & 0 & \cdots &2 &-1 & 0 \\
	0 & 0 & 0 & \cdots &-1& 2 &-1 \\
	0 & 0 & 0 & \cdots &0 &-2 & 2
\end{pmatrix}.
\ee
Using expansion on the first row it is easy to prove that  $\det\left( C_{C_n^{(1)}} \right)=0$.

We list the formula for the characteristic polynomial of the matrix  for small values of $n$:
\bd
\begin{array}{rcl}
	p_3(x) & = & x^3-6 x^2+8 x                                  = x(x-2)(x-4)            \\
	p_4(x) & = & x^4-8 x^3+19 x^2-12 x                          = x(x-1)(x-3)(x-4)       \\
	p_5(x) & = & x^5-10 x^4+34 x^3 -44 x^2 +16 x                = x(x-2)(x-4)(x^2-4 x+2) \\
	p_6(x) & = & x^6-12 x^5+53 x^4-104 x^3 +85 x^2 -20 x        =                        \\
	& & x(x-4)(x^2-5 x+5)(x^2-3 x+1)                                                     \\
	p_7(x) & = & x^7-14 x^6+76 x^5-200 x^4+259 x^3-146 x^2+24 x =                        \\
	& & x(x-1)(x-2)(x-3)(x-4)(x^2-4 x +1).
\end{array}
\ed
Define $q_n(x)=\det{\left(2xI+A\right)}$.
By expanding the determinant with respect to the first row we obtain
\bd
	q_n(x)=4\left(x T_{n-1}(x)-T_{n-2}(x)\right),
\ed
where $T_n(x)$ is the $n\ts{th}$ Chebyshev polynomial of the first kind. Equivalently,
\bd
	q_n(x)=2\left(T_n(x)- T_{n-2}(x)\right)=4\left(x^2-1\right)U_{n-2}(x).
\ed

\begin{proposition} \label{charcnt}
Let $p_n(x)$ be the characteristic polynomial of the Cartan matrix (\ref{cartancnt}).  Then
\bd
	p_n(x)= x(x-4) \sum_{j=0}^{n-2}  (-1)^{n+j}  \binom{ n+j-1}{ 2j+1} x^j.
\ed
\end{proposition}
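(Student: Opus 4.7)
The plan is to proceed exactly in parallel with the proof of Proposition \ref{charbnt}, exploiting the factored form $q_n(x)=4(x^2-1)U_{n-2}(x)$ together with the shift relation $p_n(x)=q_n\left(\frac{x}{2}-1\right)$.

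First I would substitute $\frac{x}{2}-1$ into the factorization of $q_n$. A direct computation shows that $\left(\frac{x}{2}-1\right)^2-1=\frac{x(x-4)}{4}$, so the prefactor $4(x^2-1)$ evaluated at $\frac{x}{2}-1$ collapses neatly to $x(x-4)$. This gives
\[
p_n(x)=x(x-4)\,U_{n-2}\!\left(\tfrac{x}{2}-1\right),
\]
and reduces the proposition to evaluating $U_{n-2}\!\left(\frac{x}{2}-1\right)$ in closed form.

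Next I would use the parity identity $U_m(-y)=(-1)^m U_m(y)$ to rewrite
\[
U_{n-2}\!\left(\tfrac{x}{2}-1\right)=(-1)^n U_{n-2}\!\left(1-\tfrac{x}{2}\right),
\]
so that the explicit power-series formula (\ref{Uproduct}), which is expressed in powers of $1-y$, can be applied with $y=1-\frac{x}{2}$. Under this substitution $(1-y)^j=(x/2)^j$, and the factor $(-2)^j$ cancels with $(1/2)^j$ to leave $(-1)^j$. Combined with the overall $(-1)^n$ this produces the sign $(-1)^{n+j}$ required by the statement.

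The calculation is therefore essentially formal, and I would expect no conceptual obstacle — the only place to be careful is the index shift inside the binomial coefficient. Applying (\ref{Uproduct}) with parameter $n-2$ in place of $n$ yields $\binom{(n-2)+j+1}{2j+1}=\binom{n+j-1}{2j+1}$, which matches the statement exactly. Multiplying through by $x(x-4)$ then finishes the proof. The only thing worth double-checking is the upper summation limit: since $U_{n-2}$ has degree $n-2$, the sum naturally runs from $j=0$ to $j=n-2$, in agreement with the proposition.
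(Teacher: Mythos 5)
Your argument is correct and is exactly the intended one: the paper omits the proof of this proposition precisely because it follows the proof of Proposition \ref{charbnt} verbatim, using $p_n(x)=q_n\!\left(\frac{x}{2}-1\right)$ with $q_n(x)=4(x^2-1)U_{n-2}(x)$, the identity $U_m(-x)=(-1)^mU_m(x)$, and formula (\ref{Uproduct}). Your sign bookkeeping ($(-1)^{n-2}=(-1)^n$, $(-2)^j(x/2)^j=(-1)^jx^j$) and the index shift $\binom{n+j-1}{2j+1}$ are all right, and the result checks against the listed example $p_3(x)=x(x-2)(x-4)$.
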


\subsection{Cartan matrix of type \texorpdfstring{$D_n^{(1)}$}{Dn} }
The Cartan matrix of type $  D_n^{(1)} $ is a    matrix of the form
\be
\label{cartandnt}
C_{D_n^{(1)}}=
\begin{pmatrix}
	2 & 0 &-1 & 0 &\cdots &0 &0 &0 & 0 \cr
	0 & 2 &-1 & 0 &\cdots &0 &0 &0 & 0 \cr
	-1&-1 & 2 &-1 &\cdots &0 &0 &0 & 0 \cr
	0 & 0 &-1 & 2 &\cdots &0 &0 &0 & 0 \cr
	\vdots  &\vdots & \vdots & \vdots & \ddots &\vdots &\vdots &\vdots &\vdots\cr
	0 & 0 & 0 & 0 &\cdots & 2 &-1 & 0 & 0 \cr
	0 & 0 & 0 & 0 &\cdots &-1 &2 &-1 &-1 \cr
	0 & 0 & 0 & 0 &\cdots &0 &-1& 2 & 0  \cr
	0 & 0 & 0 & 0 &\cdots &0 &-1& 0 & 2
\end{pmatrix}.
\ee

We list the formula for the characteristic polynomial
of the matrix $C_{D_n^{(1)}}$ for small values of $n$:
\bd
\begin{array}{rcl}
	p_5(x)    &=& x(x-4)(x-2)^3                                     \\
	p_6(x)    &=& x(x-1)(x-2)^2 (x-3)(x-4)                          \\
	p_7(x)    &=& x(x-2)^3(x-4)(x^2-4x+2)                           \\
	p_8(x)    &=& x(x-2)^2 (x-4)(x^2-3x+1)(x^2-5x+5)                \\
	p_9(x)    &=& x(x-2)^3(x-4) (x-1)(x-3) (x^2-4x+1)                .
\end{array}
\ed
By expanding the determinant $q_n(x)=\det{(2xI+A)}$ with respect to the first row we conclude that
\bd
	q_n(x)=2x \hat{q}_{n-1} (x)- 2x \hat{q}_{n-3} (x),
\ed
where $\hat{q}_n$ is the $q_n$ polynomial of the matrix $C_{D_n}$ (see \cite{Damianou2014}).
Therefore
\bd
	q_n(x)= 8 x^2( T_{n-2}(x)-T_{n-4}(x) ) =16x^2(x^2-1)U_{n-4}(x).
\ed
\begin{proposition} \label{chardnt}
Let $p_n(x)$ be the characteristic polynomial of the Cartan matrix (\ref{cartandnt}).  Then
\bd
	p_n(x)= x(x-2)^2(x-4) \sum_{j=0}^{n-4}  (-1)^{n+j}  \binom{ n+j-3}{ 2j+1} x^j.
\ed
\end{proposition}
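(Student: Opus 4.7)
The plan is to follow the template already used in the proofs of Propositions \ref{charbnt} and \ref{charcnt}, since all the necessary ingredients are in place: the relation $p_n(x) = q_n\!\left(\frac{x}{2}-1\right)$ recorded in the introduction, and the explicit factorization $q_n(x) = 16 x^2 (x^2-1) U_{n-4}(x)$ derived just above the statement. The problem thus reduces to a direct substitution together with a reindexing of the standard expansion of $U_{n-4}$ in powers of its argument.

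First I would substitute $\frac{x}{2}-1$ into the formula for $q_n$ and simplify the polynomial prefactor. One checks that $16\!\left(\frac{x}{2}-1\right)^{2} = 4(x-2)^2$ and that $\left(\frac{x}{2}-1\right)^{2} - 1 = \frac{(x-2-2)(x-2+2)}{4} = \frac{x(x-4)}{4}$. Multiplying, the prefactor collapses to exactly $x(x-2)^2(x-4)$, which is the prefactor appearing in the statement of the proposition.

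Second, it remains to verify the identity
\[
U_{n-4}\!\left(\frac{x}{2}-1\right) \;=\; \sum_{j=0}^{n-4} (-1)^{n+j} \binom{n+j-3}{2j+1}\, x^{j}.
\]
To obtain this I would apply the parity relation $U_m(-y) = (-1)^{m} U_m(y)$ to pass to $U_{n-4}(1 - x/2)$, picking up a factor of $(-1)^{n-4} = (-1)^{n}$. Then equation (\ref{Uproduct}) with argument $1-\frac{x}{2}$ gives
\[
U_{n-4}\!\left(1-\frac{x}{2}\right) \;=\; \sum_{j=0}^{n-4} (-2)^{j} \binom{n+j-3}{2j+1}\left(\frac{x}{2}\right)^{j} \;=\; \sum_{j=0}^{n-4} (-1)^{j} \binom{n+j-3}{2j+1}\, x^{j},
\]
since the factors of $2$ cancel. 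Combining with the earlier sign $(-1)^n$ produces the desired expression.

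Since the whole argument is a routine substitution chain and the sign bookkeeping parallels the $B_{n}^{(1)}$ case verbatim, no genuine obstacle is expected; the only point requiring a moment of care is the parity flip used to turn $\frac{x}{2}-1$ into $1-\frac{x}{2}$ so that equation (\ref{Uproduct}) can be applied directly in the variable $x$.
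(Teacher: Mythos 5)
Your proof is correct and follows exactly the template the paper uses for Proposition \ref{charbnt} (the paper omits the proof of Proposition \ref{chardnt} precisely because it is this same routine substitution); the prefactor simplification to $x(x-2)^2(x-4)$ and the sign bookkeeping via $U_m(-y)=(-1)^mU_m(y)$ together with equation (\ref{Uproduct}) all check out.
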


\section{Coxeter Systems}
\subsection{Coxeter Polynomials}
A Coxeter group $W$ is a group with presentation
\be \label{Coxetergroup}
	W=\left\langle w_1, w_2, \dots, w_n \ | \ w_i^2=1, (w_i w_j)^{m_{ij}} =1 \right\rangle,
\ee
where  $m_{ij} \in \left\{ 3, 4, \dots, \infty\right\}$. The pair $(W,S)$ is called a
Coxeter system where  $S=\left\{ w_i \ | \ i=1, \dots, n \right\} $. The Coxeter matrix
is the $n\times n$ matrix with elements $m_{ij}$, which is usually encoded in the Coxeter graph.
This graph is a simple graph $\Gamma$ with $n$ vertices
and edge weights $m_{ij} \in \left\{ 4, 5, \ldots, \infty\right\}$.
If $m_{ij}=2$, the vertex $i$ is not connected with the vertex $j$
while if $m_{ij}=3$ there is an unweighted edge between the vertices $i$ and $j$.
Coxeter groups can be visualized as groups of reflections.
We define a bilinear form $B$ on $\mathbb{R}^n$ by choosing a
basis $e_1, e_2, \dots, e_n$ and setting
\bd
	B(e_i, e_j)=-2 \cos { \frac{\pi}{m_{ij}} }.
\ed
If $m_{ij}=\infty$ we define $B(e_i, e_j)=-2$.
We also define for $i=1,2, \dots, n$ the reflection
\bd
	w_i (e_j)=e_j - B(e_i, e_j) e_i.
\ed
The group generated by $w_i$ is a Coxeter group with presentation (\ref{Coxetergroup}).
It is well-known that $W$ is finite if and only if $B$ is positive definite.
 A \textit{Coxeter element} (or transformation)  is a product of the form
\bd
	w_{\pi(1)} w_{\pi(2)} \dots w_{\pi(n)} ,  \ \pi \in S_n.
\ed
If the Coxeter graph $\Gamma$ is a tree then the Coxeter elements are in
a single conjugacy class in $W$. A \textit{Coxeter polynomial}
for the Coxeter system $(W,S)$ is the characteristic polynomial of a Coxeter element.
For Coxeter systems whose graphs are trees the Coxeter polynomial is uniquely defined.
This covers all the cases we investigate except $A_n^{(1)}$.
This case was examined in \cite{Coleman}, where it was shown
that there are $\left\lfloor\frac{n+1}{2}\right\rfloor$ non-conjugate Coxeter elements.

Weyl groups of Lie algebras are important examples of Coxeter groups.
For a Lie algebra $\g$ with simple roots
$\Pi=\left\{\ga_1,\ga_2,\ldots, \ga_{\ell}\right\}$ and Cartan matrix $C$
the Weyl group is generated by the reflections
$\sigma_j$, on the real vector space $V=\operatorname{span}\{\ga_1,\ldots,\ga_n\}$,
defined by $\sigma_j(\ga_i)=\ga_i-C_{i,j}\ga_j$.
The Weyl group is a Coxeter group with presentation:
\begin{enumerate}
	\item $\sigma_i^2=1$
	\item $(\sigma_i\sigma_j)^2=1$ ,  if  $C_{i,j}C_{j,i}=0$
	\item $(\sigma_i\sigma_j)^3=1$ ,  if $C_{i,j}C_{j,i}=1$
	\item $(\sigma_i\sigma_j)^{2k}=1$ , if  $C_{i,j}C_{j,i}=k \in \{2,3\}$.
\end{enumerate}
This presentation can be seen if we represent the reflections
$\sigma_j$ with respect to the basis
$
\left\{
\frac{\ga_1}{\|\ga_1\|},\frac{\ga_2}{\|\ga_2\|},\cdots, \frac{\ga_\ell}{\|\ga_\ell\|}
\right\}
$.
Therefore from the Dynkin diagram we obtain the corresponding Coxeter graph $\Gamma $ as follows.

\noindent
The Coxeter graph $\Gamma$ has the same vertex set as the Dynkin diagram
and there is an edge between the vertices $\ga_i,\ga_j$ if and only if $C_{i,j}\neq 0$.
The edge $(\ga_i,\ga_j)$ is labelled with $2k$ whenever $C_{i,j}C_{j,i}=k$, for $k \in \{ 2,3 \}$.
Note that for the simply laced cases the Dynkin diagram coincides with the Coxeter graph.

\begin{remark}
From the presentation of the Weyl group as a Coxeter group we see
that the Coxeter group of a twisted affine Dynkin diagram coincides with
the Coxeter group of an untwisted affine Dynkin diagram.
\end{remark}

\begin{example}

Consider the Dynkin diagram of type $B^{(1)}_4$.

\hspace{1 ex}


$
	\put(25,42){\line(5,-2){50}}
	\put(25,-2){\line(5,2){50}}
	\put(85,20){\line(1,0){50}}
	\put(145,19){\line(1,0){50}}
	\put(145,21){\line(1,0){50}}
	\put(20.2,43.3){\circle{10}}
	\put(20.2,-3.3){\circle*{10}}
	\put(80,20){\circle{10}}
	\put(140,20){\circle{10}}
	\put(200,20){\circle{10}}
	\put(0,50){$\boldsymbol{B_{4}^{(1)}}$}
	\put(167,17.5){$ > $}
$


\hspace{1 ex}

\noindent
The corresponding Coxeter graph is

\hspace{1 ex}

$
	\put(25,42){\line(5,-2){50}}
	\put(25,-2){\line(5,2){50}}
	\put(85,20){\line(1,0){50}}
	\put(145,20){\line(1,0){50}}
	\put(20.2,43.3){\circle{10}}
	\put(20.2,-3.3){\circle{10}}
	\put(80,20){\circle{10}}
	\put(140,20){\circle{10}}
	\put(170,25){4}
	\put(200,20){\circle{10}}
$


\hspace{1 ex}

\noindent
and the bilinear form is defined by the Coxeter matrix


\bd
\begin{pmatrix}
    1 & 2 & 3 & 2 & 2 \\
    2 & 1 & 3 & 2 & 2 \\
    3 & 3 & 1 & 3 & 2 \\
    2 & 2 & 3 & 1 & 4 \\
    2 & 2 & 2 & 4 & 1
\end{pmatrix} .
\ed
The reflection $\sigma_1$  is defined by
\bd
	\sigma_1 (e_1)=-e_1, \ \
	\sigma_1 (e_2)=e_2, \ \
	\sigma_1(e_3)=e_3+e_1, \ \
	\sigma_1(e_4)=e_4, \ \
	\sigma_1(e_5)=e_5,
\ed
and it has matrix representation

\bd
\sigma_1=
\begin{pmatrix}
    -1& 0 & 1 & 0 & 0      \\
    0 & 1 & 0 & 0 & 0      \\
    0 & 0 & 1 & 0 & 0      \\
    0 & 0 & 0 & 1 & 0      \\
	0 & 0 & 0 & 0 & 1
\end{pmatrix}.
\ed
Similarly we define the other simple reflections $\sigma_i$
and the Coxeter element is
\bd
w=\sigma_1 \sigma_2 \sigma_3 \sigma_4 \sigma_5=
\begin{pmatrix}
	0 & 1 & 0 & 1 & -\sqrt{2} \\
    1 & 0 & 0 & 1 & -\sqrt{2} \\
    1 & 1 & 0 & 1 & -\sqrt{2} \\
    0 & 0 & 1 & 1 & -\sqrt{2} \\
    0 & 0 & 0 &\sqrt{2}& -1
\end{pmatrix}.
\ed
The Coxeter polynomial is the characteristic polynomial of $w$
\bd
	f(x)= x^5-x^3-x^2+1=x^3(x^2-1)-(x^2-1)=(x-1)^2(x+1)(x^2+x+1)\ .
\ed

Note that the roots of $f$ are of the form
\bd
	e^{\frac{2m_j \pi i }{h}},
\ed
where $h=6$ and $m_j$ take the values $0,2,3,4,6$.
\end{example}

\subsection{Affine Coxeter number and Affine Exponents} \label{affCoxeternumber}

Let us recall the definition of exponents for a simple complex Lie group
$G$, see \cite{Bourbakifourtosix}, \cite{Coleman},  \cite{Kostant}.  Suppose  $G$ is  a connected, complex,
 simple Lie Group $G$.  We
form the de Rham cohomology groups $H^i (G, \mathbb{C})$ and the
corresponding Poincar\'e polynomial of $G$:
\bd
	p_G(t)=\sum_{i=1}^{\ell}  b_i t^i,
\ed
 where $b_i=  $dim  $H^i (G, \mathbb{C})$ are the Betti numbers of $G$. The De Rham groups
 encode topological information about $G$.
 Following work of Cartan, Pontryagin and Brauer,   Hopf proved that the
cohomology algebra is  isomorphic to that of  a finite product of $\ell$
 spheres of odd dimension  where $\ell$ is the rank of $G$. This result  implies that
\bd
	p_G(t)= \prod_{i=1}^{\ell} (1+t^{2 m_i^{\prime} +1} ).
\ed
The  positive integers $\{ m_1^{\prime}, m_2^{\prime}, .... , m_{\ell}^{\prime} \}$ are called the
{\it exponents} of $G$. They are also the exponents of the Lie algebra $\g$ of
$G$. The roots of the Coxeter polynomial of $\g$ are
\bd
	e^{\frac{2 m_j^{\prime} \pi i}{h^{\prime} }}, \ i=1,2,\ldots,\ell
\ed
where $h^{\prime}$ is the Coxeter number (see \cite{Coxeter2}).

For an affine Lie algebra with affine Cartan matrix $C$ of rank $n$,
the roots of the Coxeter polynomial $f(x)$ are in the unit disk. Thus from a theorem of
Kronecker $f(x)$ is a product of cyclotomic polynomials (see \cite{Damianou}).
Let $V$ be the real vector space $\operatorname{span}\left\{\ga_0,\ga_1,\ldots,\ga_n\right\}$
and $D$ a diagonal matrix with positive
entries such that $CD$ is symmetric. The matrix $CD$ defines a semi-positive
bilinear form $(\, ,)$ on $V$. For $\ga=\sum_{i=0}^{n} z_i\ga_i$,
$\ (\ga,.)=0 \iff \left(z_0,z_1,\ldots,z_n\right)C=0$. Thus if
$\left(z_0,z_1,\ldots,z_n\right)$ is a left zero eigenvector of $C$
and $\ga=\sum_{i=0}^{n} z_i\ga_i$, the induced bilinear form on the vector space
$\tilde{V}=V/\langle \ga\rangle$ is positive definite and corresponds to
an $n\times n$ submatrix of $C$. Therefore if $\sigma:V\longrightarrow V$
is a Coxeter transformation of the affine Lie algebra, the induced transformation
on $\tilde{V}$ has finite order $h$. It follows that $(\sigma-1)(\sigma^h-1)=0$
and therefore the roots of $f$ are $h^\ts{th}$ roots of unity.

The roots of $f(x)$ are
\be
\label{RootsOfCoxPol}
	\left\{e^{\frac{2 m_j \pi i}{h}}: m_j\in \{0,1,\ldots,h\}\right\}.
\ee
We define the integers $m_j$ to be the affine exponents and $h$ the
affine Coxeter number associated with the Coxeter transformation $\sigma$.
These numbers are uniquely defined for each affine Lie algebra
except in the case of $A_n^{(1)}$ where we define them for each conjugacy class.
Using the fact that the corank of the Cartan matrix $C_{X^{(1)}}$ is $1$
and the relation between the polynomials $p(x)$ and $f(x)$ (for the bipartite case)
it follows that $(x-1)^2 \mid f(x)$ and $(x-1)^3\nmid f(x)$. For the factor $(x-1)^2$
we define the associated affine exponents to be $0$ and $h$.

\begin{definition}\label{Branch vertex definition}
For a finite Dynkin diagram $\Gamma$ with corresponding Cartan matrix $C$ we define a weight
function $b:V(\Gamma)\rightarrow\mathbb N$ on the vertices of $\Gamma$. If the vertex $r_i$
has only one neighbor we define $b(r_i)=1-\sum_{j\neq i}C_{i,j}$ while
if it has more than one neighbors we define $b(r_i)=-\sum_{j\neq i}C_{i,j}$.
For a Dynkin diagram $\Gamma$ not of type $A_n$, we define the branch vertex $r_i$ to be the one
which maximize $b$. For the case of $A_n, \ n$ odd, we define the branch vertex to be
the middle one.
\end{definition}
\begin{example}
For the case of the Dynkin diagrams $D_n,E_6,E_7,E_8$ the branch vertex is the one which
is the common endpoint of three edges. For the Dynkin diagram of type $C_n$ the branch vertex
is the one which corresponds to the highest root.
\end{example}
From Steinberg's theorem \cite[p.591 ]{Steinberg} we see that,
the affine exponents and the affine Coxeter number of an affine
Lie algebra of type $X_n^{(1)}\neq A_n^{(1)}$ can be computed using  the exponents
and Coxeter number of  the root system $A_n$. Let
$\Pi=\{\ga_1,\ga_2,\ldots, \ga_{n}\}$ be the simple roots of the Lie algebra of type $X_n$.
Define the branch root $\beta$ to be that root
which corresponds to the branch vertex of the corresponding Dynkin diagram.
If we delete the branch root the reduced system is a product of
root systems of type $A_m$. The Coxeter polynomial of $X_n^{(1)}$ is
$f(x)=(x-1)^2g(x)$ with $g(x)$ the Coxeter polynomial
of the reduced system. The affine Coxeter number $h$ is the
Coxeter number of the reduced system and the
affine exponents are obtained using the following procedure:

\noindent
From the factor $(x-1)^2 \mid f(x)$ it follows that $0$ and $h$ are affine exponents.
If $Y$ appears in the reduced system and $m_j^{'}$ is an exponent
and $ h^{'}$  is the Coxeter number of $Y$ then $\dfrac{h}{h^{'}}m_j^{'}$
is an affine exponent of $X^{(1)}$.

\begin{table}[!htbp]
  \caption{Affine Exponents and affine Coxeter number for affine root systems.
  \label{affinecoxetern}}
  \begin{center}
  \begin{tabular}{|c|c|c|}
    \hline
       \ Root system\ \ & Affine Exponents\ \ & Affine Coxeter number\\
    \hline
      $A_{n}^{(1)}    $ & $0,k_j,2k_j,\ldots,jk_j,           $ & $jk_j$    \\
                        & $n_j,2n_j,\ldots,(n-j)n_j          $ &           \\
      $B_{2n+1}^{(1)} $ & $0,1,2,3,\ldots,2n,n               $ & $2n$      \\
      $B_{2n}^{(1)}   $ & $0,2,4,\ldots,2(2n-1),2n-1         $ & $2(2n-1)$ \\
      $C_n^{(1)}      $ & $0,1,2,\dots, n                    $ & $n$       \\
      $D_{2n+1}^{(1)} $ & $0,2,4,\ldots,2(2n-1),2n-1,2n-1    $ & $2(2n-1)$ \\
      $D_{2n}^{(1)}   $ & $0,1,2,3,\ldots,2n-2,n-1,n-1       $ & $2n-2$    \\
      $E_6^{(1)}      $ & $0,2,2,3,4,4,6                     $ & $6$       \\
      $E_7^{(1)}      $ & $0,3,4,6,6,8,9,12                  $ & $12$      \\
      $E_8^{(1)}      $ & $0,6,10,12,15,18,20,24,30          $ & $30$      \\
      $F_4^{(1)}      $ & $0,2,3,4,6                         $ & $6 $      \\
      $G_2^{(1)}      $ & $0,1,2                             $ & $2$       \\
    \hline
  \end{tabular}
  \end{center}
\end{table}

\begin{example}
For the root system $E_8^{(1)}$ the reduced system is $A_1 \times A_2 \times A_4$.
The exponents of $A_n$ are $1,2,\ldots,n$ and the Coxeter number
$n+1$ (see \cite{Damianou2014}). Therefore the affine Coxeter number for the root system $E_8^{(1)}$
is $lcm(2,3,5)=30$ and the affine exponents are $0, 6, 10, 12, 15, 18, 20, 24, 30$.
\end{example}

In table \ref{affinecoxetern} we list the affine exponents and the affine Coxeter number for
the affine Lie algebras. In the case of $A_n^{(1)}$, for
$j=1,2,\ldots,\left\lfloor\frac{n+1}{2}\right\rfloor$ we have denoted $k_j=\frac{n+1-j}{d_j}$
and $n_j=\frac{j}{d_j}$, where $d_j=\gcd(n+1,j)$. The affine exponents and
affine Coxeter number of $A_n^{(1)}$ given in table \ref{affinecoxetern}, are those
associated with the Coxeter polynomial $(x^j-1)(x^{n+1-j}-1)$. For $n$ odd and
$j=\frac{n+1}{2}$ we have $n_j=k_j=1, d_j=j$ and we obtain the case considered in \cite{Moody}.
Note the duality in the set of affine exponents:
\be  \label{dual}
	m_i+m_{n-i} = h, \ i=0,1,\ldots, \left\lfloor\frac{n}{2}\right\rfloor,
\ee
where $h$ is the affine Coxeter number. This is a consequence of (\ref{RootsOfCoxPol}).

The affine exponents, affine Coxeter number of $X_n^{(1)}$ and the roots of $X_n$ are related in
a mysterious way (see \cite{Moody}). Let $\Pi=\{\ga_1,\ga_2,\ldots,
\ga_{n}\}$ be the simple roots of the Lie algebra of type $X_n\neq A_n$,
$V=\mathbb{R}\operatorname{-span}(\ga_1,\ga_2,\ldots,
\ga_{n})$ and $\beta$ be the branch root of $X_n$.
Denote $\ga_i^\vee=2 \frac{\ga_i}{(\ga_i,\ga_i)}$ the coroots and $w_{\ga_i^\vee} \in V^*$
the corresponding weights. Write $w_{\beta^\vee}=(v,\cdot), \ v\in V$
and let $c\in \mathbb{N}$ be the smallest integer such that
$c \cdot v \in \mathbb{Z}\operatorname{-span}(\ga_1^\vee,\ga_2^\vee,\ldots, \ga_{n}^\vee)$.
Then $c\cdot v=\displaystyle{\sum_{j=1}^{n}m_j\ga_j^\vee}$
where $m_j$ are the nonzero affine exponents of $X_n^{(1)}$
and the coefficient of $\beta^\vee$ is the affine Coxeter number.

\begin{example}

Removing  the branch vertex of $B_4$ we obtain  the root
system $A_2 \times A_1$ with Coxeter polynomial $g(x)=(x^2+x+1)(x+1)$; the Coxeter
polynomial of $B_4^{(1)}$ is $(x-1)^2(x^2+x+1)(x+1)$.
The Coxeter number of $A_2 \times A_1$ is the affine Coxeter
number of $B_4^{(1)}$, that is $3 \cdot 2=6$.

The roots of the Coxeter polynomial  are $1,1,-1, \omega,\omega^2$, where $\omega$ is  a
primitive third root of unity. If $\zeta=e^{\frac{2 \pi i}{6}}$ then
$1=\zeta^{0}, \omega=\zeta^{2}, -1=\zeta^{3},\omega^2=\zeta^{4}, 1=\zeta^{6}$.  The
numbers $0,2,3,4,6$ are the affine exponents of $B^{(1)}_4\ .$

A representation of the root system $B_4$ is given by $\ga_i=e_i-e_{i+1} \in \mathbb{R}^4, \
i=1,2,3$ and $\ga_4=e_4$, with the usual inner product of $\mathbb{R}^4$. The corresponding
co-roots are $\ga_i^\vee=e_i-e_{i+1} \in \mathbb{R}^4, \ i=1,2,3$ and $\ga_4^\vee=2e_4$
(root system of type $C_4$). The branch root is the root $\ga_3$ and the corresponding co-weight
is $v=w_{\ga_3^\vee}=(1,1,1,0) \in (\mathbb{R}^4)^*$. Now $v$ does not belong to the co-root
lattice but $2v=2\ga_1^\vee+4\ga_2^\vee+6\ga_3^\vee+3\ga_4^\vee$ does.
Therefore for $B_4^{(1)}$,  $c=2$, the non
zero affine exponents are $2,3,4,6$ and the affine Coxeter number is $6$.
\end{example}

\begin{example}
For the case of $D_6$ with root system $\{\ga_1,\ga_2,\ldots,\ga_6\}$ and branch root $\ga_4$ it
can be easily verified that $v=w_{\ga_4}=(1,1,1,1,0,0)=\ga_1+2\ga_2+3\ga_3+4\ga_4+2\ga_5+2\ga_6$.
Therefore for $D_6^{(1)}, \ c=1$, the nonzero affine exponents
are $1,2,2,2,3,4$ and the affine Coxeter number is $4$.
\end{example}

\section{Associated polynomials for affine Lie Algebras}

The Coxeter polynomials for the affine Lie algebras are  well-known, see e.g. \cite{Steko}. We
display their formulas in table \ref{affinecoxeterpoly}.
The novelty of our approach is the calculation of these polynomials using Chebyshev polynomials.

\begin{table}[!htbp]
  \caption{Coxeter polynomials for Affine Graphs}   \label{affinecoxeterpoly}
  \begin{center}
  \begin{tabular}{|c|p{4cm}|p{4cm}|}
    \hline
      Dynkin Diagram &Coxeter  polynomial & Cyclotomic Factors \\ \hline
      $ A_n^{(1)} $  &
      $ (x^i-1)\cdot(x^{n+1-i}-1),\newline i=1,2,\ldots,\left\lfloor\frac{n+1}{2}\right\rfloor $ &
      $\prod_{d|i} \Phi_{d} \prod_{d|n+1-i} \Phi_{d},\newline i=1,2,\ldots,\left\lfloor\frac{n+1}
      {2}\right\rfloor$ \\
      \hline
      $ B_n^{(1)} $ &
      $(x^{n-1}-1)(x^2-1)$ &
      $ \Phi_1 \Phi_2 \prod_{d|n-1} \Phi_{d}$ \\
      \hline
      $ C_n^{(1)} $ &
      $ (x^n-1)(x-1)$  &
      $ \Phi_1 \prod_{d|n} \Phi_{d}$ \\
      \hline
      $ D_n^{(1)} $ &
      $(x^{n-2}-1)(x-1)(x+1)^2$  &
      $ \Phi_1 \Phi_2^2 \prod_{d|{n-2}} \Phi_{d} $ \\
      \hline
      $ E_6^{(1)} $ &
      $x^7+x^6-2 x^4-2 x^3+x+1$ &
      $\Phi_1^2 \Phi_2 \Phi_3^2$ \\
      \hline
      $ E_7^{(1)} $ &
      $x^8+x^7-x^5-2x^4-x^3+x+1$ &
      $\Phi_1^2 \Phi_2^2 \Phi_3\Phi_4^2$ \\
      \hline
      $ E_8^{(1)} $ &
      $x^9+x^8-x^6-x^5-x^4-x^3+x+1 $ &
      $\Phi_1^2 \Phi_2 \Phi_3\Phi_5$ \\
      \hline
      $ F_4^{(1)} $ &
      $x^5-x^3-x^2+1$ &
      $\Phi_1^2 \Phi_2 \Phi_3 $ \\
      \hline
      $ G_2^{(1)} $  &
      $x^3-x^2-x+1 $ &
      $\Phi_1^2 \Phi_2$ \\
    \hline
  \end{tabular}
  \end{center}
\end{table}

\subsection{Associated Polynomials for \texorpdfstring{$A_n^{(1)} $}{An}  }

\bigskip

In the case of $A_{n-1}^{\left(1\right)}$ we have

\noindent
$q_n\left(x\right)=2\left(T_n\left(x\right)+\left(-1\right)^{n-1}\right)$ so
$a_n\left(x\right)=2\left(T_n\left(\frac{x}{2}\right)+\left(-1\right)^{n-1}\right)$

\noindent
and $Q_n\left(x\right)=2x^n\left(T_n\left(\frac{1}{2}\left(x+\frac{1}
{x}\right)\right)+\left(-1\right)^{n-1}\right)$.

If we set $x=e^{i \theta}$ we have
\begin{gather*}
	2x^nT_n\left(\frac{1}{2}\left(x+\frac{1}{x}\right)\right)=
	2x^nT_{n}\left(\cos{\theta}\right)=2x^n\cos{\left(n\theta\right)}\\
	=2x^n\frac{1}{2}\left(e^{i n\theta}+e^{-i n\theta}\right)=x^n\left(x^n+\dfrac{1}{x^n}\right)
	=x^{2n}+1.
\end{gather*}
Therefore
\[
Q_n\left(x\right)=x^{2n}+\left(-1\right)^{n-1}2x^n+1=\left(x^n+\left(-1\right)^{n-1}\right)^2.
\]
The factorization of $Q_n$ is given by
\[
Q_n \left(x\right)=
\begin{cases}
g_2 & n \;\;   \text{even}\\
\frac{g_1}{g_2} & n\;\;\text{odd}
\end{cases},
\]
where
$
g_1=
\displaystyle{
\prod_{d|2n} \Phi_d^2,\;\;g_2=\prod_{d|n}\Phi_d^2
}.
$

In the case of $A_n^{(1)}$ (since the graph $A_n^{(1)}$ is not a tree) the Coxeter
polynomial is not uniquely defined. There are $\left\lfloor\frac{n+1}{2}\right\rfloor$ non
conjugate Coxeter elements each one producing a different Coxeter polynomial. These
polynomials are given by the formula (see \cite{Coleman})
\[
(x^i-1)\cdot(x^{n+1-i}-1),\;\;\;i=1,2,\ldots,\left\lfloor\frac{n+1}{2}\right\rfloor.
\]
The factorization of these polynomials is given by
\[
\prod_{d|i}\Phi_d(x)\prod_{d|n+1-i}\Phi_d(x),\;\;\;i=1,2,\ldots,
\left\lfloor\frac{n+1}{2}\right\rfloor
\]
and for the first values of $n$ we obtain those presented in table \ref{CoxeterA_n}.

Note that when $n$ is even the polynomial $Q_n(x)$ can be written in the form
$Q_n(x)=f_n(x^2)$ with $f_n(x)$ the Coxeter polynomial corresponding
to the largest conjugacy class of Coxeter elements.  In fact,  for $n$ even,
\bd
	f_n(x)=\left( x^{\frac{n}{2} } -1 \right)^2.
\ed

Using the formula we  have found for the polynomial $Q_n(x)$ we can calculate the roots of the
polynomial $a_n(x)$. Since  $Q_n(x)=\left(x^n+(-1)^{n-1}\right)^2$, the roots of $Q_n$
are given by
\[
\begin{split}
	e^{\frac{2k\pi i}n},&\;\;\;k=0,1,\ldots,n-1\ , \;\; \text{for n even} \\
	e^{\frac{(2k+1)\pi i}n},&\;\;\;k=0,1,\ldots,n-1\ , \;\; \text{for n odd}
\end{split}
\]
each one being a double root. Now if $r$ is a root of $a_n$, it follows that $x-r$
is a factor of $a_n$ so $x\left(x+\dfrac{1}{x}-r\right)=x^2-r x+1$
is a factor of $Q_n(x)$, meaning that $x^2-r x+1=(x-c)(x-\bar{c})$,
with $c$ being one of the roots of $Q_n$ and $r=2 \operatorname{Re}(c)$.
We conclude that the roots of $a_n$ are given by
\[
\begin{split}
	2\cos{\frac{2k\pi}n}&,\;\;\;k=0,1,\ldots, n-1\;\;\text{for $n$ even},\\
	2\cos{\frac{(2k+1)\pi}n}&,\;\;\;k=0,1,\ldots, n-1\;\;\text{for $n$ odd}.
\end{split}
\]
From the identity $\cos(-x)=\cos{x}$ it follows that the roots of $a_{2n+2}(x)$ are given by
\[
2\cos{\frac{k\pi}{n+1}},\;\;\;k=0,1,1,2,2,\ldots,n,n,n+1,
\]
where $k=0,1,1,2,2,\ldots,n,n,n+1$ are the affine exponents
and $h=n+1$ is the affine Coxeter number associated with the Coxeter polynomial $(x^{n+1}-1)^2$.

\begin{table}[!htbp]
\caption{Coxeter polynomials for $A_n^{(1)}$}
\label{CoxeterA_n}
\begin{center}
\begin{tabular}{|c|p{9.5cm}|}
    \hline
    n &  $ f_{n+1}(x)$ \\ \hline
    3 & $i=1:\; x^4-x^3-x+1=(x-1)(x^2-1)=\Phi_1^2\Phi_2$ \\
      & $i=2:\; x^4-2x^2+1=(x^2-1)(x^2-1)=\Phi_1^2\Phi_2^2$ \\ \hline
    4 & $i=1:\; x^5-x^4-x+1=(x-1)(x^4-1)=\Phi_1^2\Phi_2\Phi_4$ \\
      & $i=2:\; x^5-x^3-x^2+1=(x^2-1)(x^3-1)=\Phi_1^2\Phi_2\Phi_3$ \\ \hline
    5 & $i=1:\; x^6-x^5-x+1=(x-1)(x^5-1)=\Phi_1^2\Phi_5$ \\
      & $i=2:\; x^6-x^4-x^2+1=(x^2-1)(x^4-1)=\Phi_1^2\Phi_2^2\Phi_4$ \\
      & $i=3:\; x^6-2x^3+1=(x^3-1)(x^3-1)=\Phi_1^2\Phi_3^2$ \\ \hline
    6 & $i=1:\; x^7-x^6-x+1=(x-1)(x^6-1)=\Phi_1^2\Phi_2\Phi_3\Phi_6$ \\
      & $i=2:\; x^7-x^5-x^2+1=(x^2-1)(x^5-1)=\Phi_1^2\Phi_2\Phi_5$ \\
      & $i=3:\; x^7-x^4-x^3+1=(x^3-1)(x^4-1)=\Phi_1^2\Phi_2\Phi_3\Phi_4$ \\ \hline
    7 & $i=1:\; x^8-x^7-x+1=(x-1)(x^7-1)=\Phi_1^2\Phi_7$ \\
      & $i=2:\; x^8-x^6-x^2+1=(x^2-1)(x^6-1)=\Phi_1^2\Phi_2^2\Phi_3\Phi_6$ \\
      & $i=3:\; x^8-x^5-x^3+1=(x^3-1)(x^5-1)=\Phi_1^2\Phi_3\Phi_5$ \\
      & $i=4:\; x^8-2x^4+1=(x^4-1)(x^4-1)=\Phi_1^2\Phi_2^2\Phi_4^2$\\ \hline
    8 & $i=1:\; x^9-x^8-x+1=(x-1)(x^8-1)=\Phi_1^2\Phi_2\Phi_4\Phi_8$ \\
      & $i=2:\; x^9-x^7-x^2+1=(x^2-1)(x^7-1)=\Phi_1^2\Phi_2\Phi_7$ \\
      & $i=3:\; x^9-x^6-x^3+1=(x^3-1)(x^6-1)=\Phi_1^2\Phi_2\Phi_3^2\Phi_6$ \\
      & $i=4:\; x^9-x^5-x^4+1=(x^4-1)(x^5-1)=\Phi_1^2\Phi_2\Phi_4\Phi_5$\\ \hline
\end{tabular}
\end{center}
\end{table}

\begin{example}

In the case of  $A_5^{(1)}$  we have
\bd
	a_6(x)=\left(x^2-1\right)^2\left(x^2-4\right).
\ed
Therefore  the roots of $a_6(x)$ are
\bd
	1,1,-1,-1,2,-2,
\ed
and they have the form
\bd
	2 \cos \frac{ m_i \pi}{h},
\ed
where $m_i$ are the affine exponents  and $h$ is the affine Coxeter number associated
with the Coxeter polynomial $(x^3-1)^2$.
\end{example}

\subsection{Associated Polynomials for \texorpdfstring{$B_n^{(1)}$}{Bn}}
\bigskip

In the case of $ B_{n-1}^{(1)}$ we have
\[
q_n\left(x\right)=8x \left(x^2-1\right) U_{n-3} \left(x \right).
\]
Therefore,
\[
a_n\left(x\right)=q_n\left( \frac{x}{2}\right)=
x\left(x^2-4\right) U_{n-3}\left( \frac{x}{2}\right),
\]
and
\[
Q_n\left(x\right)=x^n \left(x^3-x-\frac{1}{x}+\frac{1}{x^3} \right)
U_{n-3} \left( \frac{1}{2} \left( x+ \frac{1}{x}\right) \right).
\]

Set $x=e^{i\theta}$ to obtain
\[
\begin{split}
	Q_n\left(x\right)&=x^{n-3} \left(x^6-x^4-x^2+1\right) U_{n-3} \left(\cos \theta\right)\\
	&= x^{n-3} \left(x^4-1\right)\left(x^2-1\right)
	\frac{ \sin\left(n-2\right) \theta}{\sin \theta}\\
	&=x^{n-3} \left(x^4-1\right)\left(x^2-1\right)\frac{\left( e^{i\left(n-2\right)\theta}-
	e^{-i\left(n-2\right)\theta}\right)}{e^{i\theta}-e^{-i \theta}}\\
	&= x^{n-3} \left(x^4-1\right)\left(x^2-1\right)\frac{x}{x^{n-2}}
	\frac{ x^{ 2\left(n-2\right)}-1 }{x^2-1}  =x^{2n}-x^{2\left(n-2\right)}-x^4+1.
\end{split}
\]
Therefore
\[
Q_n\left(x\right)=x^{2n}-x^{2\left(n-2\right)}-x^4+1=\left(x^4-1\right)
\left(x^{2\left(n-2\right)}-1\right)=\Phi_1\Phi_2\Phi_4\prod_{d|2\left(n-2\right)}\Phi_d,
\]
for all $x \in \mathbb{C}$.  The Coxeter polynomial for $B_n^{\left(1\right)}$ is then
\[
f_{n+1} \left(x\right)=x^{n+1} -x^{n-1} -x^2+1  =\left(x^{n-1}-1\right)
\left(x^2-1\right)=\displaystyle{\Phi_1\Phi_2\prod_{d|n-1}\Phi_d}
\]
and the factorization of $a_n$ is given by
\bd
	a_n(x) =\Psi_4  \prod_{\substack{ j|2(n-2) }} \Psi_j(x).
\ed

We present the factorization of $f_n(x)$ for small values of $n$.
\begin{itemize}
	\item  $B_3^{(1)}$ \ \
	$a_4=x^4-4x^2$ \ \ \ \ \
	$f_4(x)=\Phi_1^2 \Phi_2^2$
	\item  $B_4^{(1)}$ \ \
	$a_5=x^5-5x^3+4x$ \ \ \ \ \
	$f_5= \Phi_1^2 \Phi_2 \Phi_3$
	\item  $B_5^{(1)}$ \ \
	$a_6=x^6-6x^4+8x^2$ \ \ \ \ \
	$f_6=\Phi_1^2 \Phi_2^2 \Phi_4 $
	\item  $B_6^{(1)}$ \ \
	$a_7=x^7-7x^5+13 x^3-4x$ \ \ \ \ \
	$f_7=\Phi_1^2 \Phi_2 \Phi_5$
	\item  $B_7^{(1)}$ \ \
	$a_8=x^8-8x^6+19x^4-12x^2$ \  \ \ \ \
	$f_8= \Phi_1^2 \Phi_2^2\Phi_3 \Phi_6 $
	\item  $B_8^{(1)}$ \ \
	$a_9=x^9-9x^7+26x^5 -25x^3+4x$ \ \ \ \ \
	$f_9=\Phi_1^2 \Phi_2 \Phi_7$
	\item  $B_9^{(1)}$ \ \
	$a_{10}=x^{10}-10x^8+34x^6 -44x^4+16x^2 $ \ \ \ \ \
	$f_{10}=\Phi_1^2 \Phi_2^2 \Phi_4 \Phi_8 $
\end{itemize}
In general we have two cases:
\bigskip

{\bf 1}) For the case of $B_{2n+1}^{(1)}$
\bd
	a_{2n+2}(x)=x\left(x^2-4\right) U_{2n-1} \left( \frac{x}{2} \right).
\ed
Since the roots of $U_n(x)$  are
\bd
	\cos \left( \frac{k \pi}{n+1} \right),  \ \ \ \ \ k=1,2, \dots, n,
\ed
the roots of $a_{2n+2}$ are $0, \pm 2$ and
\bd
	2 \cos \frac{k \pi} {2n}, \ \ \ \ \ \ \ \ \ \ k=1,2, \dots, 2n-1.
\ed
Therefore the affine exponents are $0,1,2,\ldots,n-1,n,n,n+1,\ldots,2n-1,2n$
and the affine Coxeter number is $h=2n .$

\bigskip
{\bf 2})  For the case of $B_{2n}^{(1)}$
\bd
	a_{2n+1}(x)=x\left(x^2-4\right) U_{2n-2} \left( \frac{x}{2} \right).
\ed
Since the roots of $U_n(x)$  are
\bd
	\cos \left( \frac{k \pi}{n+1} \right),  \ \ \ \ \ k=1,2, \dots, n,
\ed
the roots of $a_{2n+1}$ are $0,\pm 2$ and
\bd
	\cos \frac{2k \pi} {2 (2n-1)}, \ \ \ \ \ \ \ \ \ \ k=1,2, \dots, 2n-2.
\ed
It follows that the affine exponents are $0,2,\ldots,2n-2,2n-1,2n,\ldots,2(2n-1)$
and the affine Coxeter number  is $h=2(2n-1).$

\subsection{Associated Polynomials for \texorpdfstring{$C_n^{(1)}$}{Cn}  }

For $ C_{n-1}^{(1)}$ we have
\[
q_n(x)= 4 \left(x^2-1\right) U_{n-2} (x).
\]
Therefore,
\[
a_n(x)=q_n\left( \frac{x}{2}\right)=\left(x^2-4\right)U_{n-2}\left(\frac{x}{2}\right),
\]
and
\[
Q_n(x)=x^n \left(x^2-2+\frac{1}{x^2} \right)  U_{n-2}
\left( \frac{1}{2} \left( x+ \frac{1}{x}\right) \right).
\]
Using  the same method as in the previous subsection we obtain
\[
Q_n(x)=x^{2n}-x^{2(n-1)}-x^2+1=\left(x^{2(n-1)}-1\right)\left(x^2-1\right)=
\Phi_1\Phi_2\prod_{d|{2(n-1)}}\Phi_d,
\]
for all $x \in \mathbb{C}$.
The Coxeter polynomial for $C_n^{(1)}$ is then
\[
f_{n+1}(x)=x^{n+1}-x^{n}-x+1=\left(x^{n}-1\right)(x-1)= \Phi_1\prod_{d|n}\Phi_d.
\]
We present the factorization of $f_n(x)$ for small values of $n$.
\begin{itemize}
	\item $C_3^{(1)}$ \ \   $a_4=x^4-5x^2+4$ \ \ \ \ \ $f_4(x)=\Phi_1^2 \Phi_3$
	\item  $C_4^{(1)}$ \ \  $a_5=x^5-6x^3+8x  $ \ \ \ \ \ $f_5= \Phi_1^2 \Phi_2 \Phi_4$
	\item  $C_5^{(1)}$ \ \  $a_6=x^6-7x^4+13x^2-4$ \ \ \ \ \  $f_6=\Phi_1^2 \Phi_5$
	\item  $C_6^{(1)}$ \ \  $a_7=x^7-8x^5+19 x^3-12x  $ \ \ \ \ \ $f_7=\Phi_1^2 \Phi_2
	\Phi_3\Phi_6$
	\item  $C_7^{(1)}$ \ \  $a_8=x^8-9x^6+26x^4-25x^2+4 $ \  \ \ \ \ $f_8= \Phi_1^2 \Phi_7$
	\item  $C_8^{(1)}$ \ \  $a_9=x^9-10x^7+34x^5 -44x^3+16x$ \ \ \ \ \ $f_9=\Phi_1^2 \Phi_2
	\Phi_4 \Phi_8$
	\item  $C_{9}^{(1)}$ \ \ $a_{10}=x^{10}-11x^8+43x^6 -70x^4+41x^2-4 $ \ \ \ \ \
	$f_{10}=\Phi_1^2 \Phi_3 \Phi_9 $
\end{itemize}
The factorization of $a_n$ is given by
\bd
	a_n(x) = \prod_{\substack{ j|2(n-1) }} \Psi_j(x).
\ed

\noindent
In general we have
\bd
	a_{n+1}(x)=\left(x^2-4\right) U_{n-1} \left( \frac{x}{2} \right)
\ed
and therefore the roots of $a_{n+1}$ are $\pm 2$ and
\bd
	2 \cos \frac{k \pi} {n} \ \ \ \ \ \ \ \ \ \ k=1,2, \dots, n-1.
\ed
The affine exponents are $0,1,\ldots,n-1,n$ and the affine Coxeter number is $h=n.$

\subsection{Associated Polynomials for \texorpdfstring{$D_n^{(1)}$}{Dn}  }

In the case of $ D_{n-1}^{(1)}$ we have
\bd
	q_n(x)= 16x^2 \left(x^2-1\right)U_{n-4} (x).
\ed
Therefore,
\bd
	a_n(x)=q_n\left( \frac{x}{2}\right)=
	x^2\left(x^2-4\right)U_{n-4}\left(\frac{x}{2}\right)
\ed
and
\bd
	Q_n(x)=x^n \left(x^4-2+\frac{1}{x^4} \right)  U_{n-4}
	\left( \frac{1}{2} \left( x+ \frac{1}{x}\right)\right).
\ed
It follows that
\[
Q_n(x)=\left(x^4-1\right)\left( x^{2(n-2)}+x^{2(n-3)}-x^2-1\right)
=\left(x^4-1\right)\left(x^2+1\right)\left(x^{2(n-3)}-1\right)
\]
and the Coxeter polynomial for $D_n^{(1)}$ is
\bd
	f_{n+1} (x)=\left(x^{n-2}-1\right)\left(x^2-1\right)\left(x+1\right).
\ed
We present the factorization of $f_n(x)$ for small values of $n$.
\begin{itemize}
	\item  $D_4^{(1)}$ \ \  $a_5=x^5-4x^3  $ \ \ \ \ \ $f_5= \Phi_1^2 \Phi_2^3$
	\item  $D_5^{(1)}$ \ \  $a_6=x^6-5x^4+4x^2$ \ \ \ \ \  $f_6=\Phi_1^2 \Phi_2^2 \Phi_3$
	\item  $D_6^{(1)}$ \ \  $a_7=x^7-6x^5+8 x^3  $ \ \ \ \ \ $f_7=\Phi_1^2 \Phi_2^3 \Phi_4$
	\item  $D_7^{(1)}$ \ \  $a_8=x^8-7x^6+13x^4-4x^2 $ \  \ \ \ \ $f_8= \Phi_1^2 \Phi_2^2
	\Phi_5$
	\item  $D_8^{(1)}$ \ \  $a_9=x^9-8x^7+19x^5 -12x^3$ \ \ \ \ \ $f_9=\Phi_1^2 \Phi_2^3
	\Phi_3 \Phi_6$
\end{itemize}
Note that the factorization of $Q_n(x)$ is
\bd
	Q_n(x)= \Phi_1 \Phi_2 \Phi_4^2 \prod_{\substack{ j|2(n-3) }} \Phi_j(x),
\ed
and the factorization of $f_n(x)$ is
\bd
	f_n(x)= \Phi_1\Phi_2^2\prod_{\substack{ j|(n-3) }} \Phi_j(x).
\ed
The corresponding factorization of $a_n(x)$ is
\bd
	a_n(x) =\Psi_4^2  \prod_{\substack{ j|2(n-3) }} \Psi_j(x).
\ed
In general we have two cases:

{\bf 1})  For the case of $D_{2n+1}^{(1)}$ we have
\bd
	a_{2n+2}(x)=x^2(x^2-4) U_{2n-2} \left( \frac{x}{2} \right).
\ed
The roots of $a_{2n+2}$ are $0,0, \pm 2$ and
\bd
	2 \cos \frac{2k \pi}{2 (2n-1)} \, , \ \ \ \ \ \ \ \ \ \ k=1,2, \dots, 2n-2.
\ed
The affine exponents are $0,2,\ldots,2n-2,2n-1,2n-1,2n,\ldots,2(2n-1)$
and the affine Coxeter number $h=2(2n-1).$

{\bf 2}) For the case of $D_{2n}^{(1)}$
\bd
	a_{2n+1}(x)=x^2(x^2-4) U_{2n-3} \left( \frac{x}{2} \right)
\ed
and the roots of $a_{2n+1}$ are $0,0, \pm 2$ and
\bd
	2 \cos \frac{k \pi} {2 (n-1)} \ \ \ \ \ \ \ \ \ \ k=1,2, \dots, 2n-3.
\ed
Therefore the affine exponents are $0,1,\ldots,n-2,n-1,n-1,n,2n-3,2n-2$
and the affine Coxeter number is $h=2n-2.$

\section{Exceptional Lie algebras}

\subsection{\texorpdfstring{$E_6^{(1)}$}{E6} Graph}

The Cartan matrix of $E_6^{(1)}$ is of type
\bd
C_{E_6^{(1)}}=
\begin{pmatrix}
	 2 & 0 & 0 & 0 & 0 & 0 &-1 \\
	 0 & 2 &-1 & 0 & 0 & 0 & 0 \\
	 0 &-1 & 2 &-1 & 0 & 0 & 0 \\
	 0 & 0 &-1 & 2 &-1 & 0 &-1 \\
	 0 & 0 & 0 &-1 & 2 &-1 & 0 \\
	 0 & 0 & 0 & 0 &-1 & 2 & 0 \\
	-1 & 0 & 0 &-1 & 0 & 0 & 2 \
\end{pmatrix}.
\ed
The polynomial $p_7(x)$ is
\bd
	p_7(x)=x^7-14x^6+78x^5-220x^4+329x^3-246x^2+72x
\ed
and the polynomial $a_7(x)$
\bd
	a_7(x)=x^7-6x^5+9x^3-4x.
\ed
The roots of $a_7(x)$ are
\bd
	0, 1, -1, 1, -1, 2, -2
\ed
i.e.
\bd
	2 \cos  \frac{ m_i \ \pi }{6},
\ed
where $m_i  \in \{0,2,2,3,4,4,6 \} $. These are the affine exponents
for $E_6^{(1)}$ and the affine
Coxeter number is $h=6$.

The Coxeter polynomial is
\bd
	f_7(x)=x^7+x^6-2 x^4-2 x^3+x+1=
	\prod_{m_i}\left(x-e^{\frac{2 m_i \pi}{6}}\right)=\Phi_1^2 \Phi_2 \Phi_3^2.
\ed

\subsection{\texorpdfstring{$E_7^{(1)}$}{E7} Graph}
The Cartan matrix of $E_7^{(1)}$ is of type
\bd
C_{E_7^{(1)}}=
\begin{pmatrix}
	 2 &-1 & 0 & 0 & 0 & 0 & 0 & 0 \\
	-1 & 2 &-1 & 0 & 0 & 0 & 0 & 0 \\
	 0 &-1 & 2 &-1 & 0 & 0 & 0 & 0 \\
	 0 & 0 &-1 & 2 &-1 & 0 & 0 &-1 \\
	 0 & 0 & 0 &-1 & 2 &-1 & 0 & 0 \\
	 0 & 0 & 0 & 0 &-1 & 2 &-1 & 0 \\
	 0 & 0 & 0 & 0 & 0 &-1 & 2 & 0 \\
	 0 & 0 & 0 &-1 & 0 & 0 & 0 & 2 \
\end{pmatrix}.
\ed
The polynomial $p_8(x)$ is
\bd
p_8(x)= x^8-16x^7+105x^6-364x^5+714x^4-784x^3+440x^2-96x
\ed
and the polynomial $a_8(x)$
\bd
a_8(x)=x^8-7x^6+14x^4-8x^2.
\ed
The roots of $a_8(x)$ are
\bd
0, 0, 1, -1,\sqrt{2},-\sqrt{2}, 2, -2
\ed
i.e.
\bd
2 \cos  \frac{ m_i \ \pi }{12},
\ed
where $m_i  \in \{0,3,4,6,6,8,9,12 \} $.
These are the affine exponents for $E_7^{(1)}$ and $h=12$ is the affine Coxeter number.

The Coxeter polynomial is
\bd
	f_8(x)=x^8+x^7-x^5-2x^4-x^3+x+1=\prod_{m_i}\left(x-e^{\frac{2 m_i \pi}{12}}\right)=
	\Phi_1^2 \Phi_2^2 \Phi_3 \Phi_4^2.
\ed

\subsection{\texorpdfstring{$E_8^{(1)}$}{E8} Graph}
The Cartan matrix for  $E_8^{(1)}$ is
\bd
\begin{pmatrix}
	 2 & 0 & 0 & 0 & 0 & 0 & 0 &-1 & 0 \\
	 0 & 2 &-1 & 0 & 0 & 0 & 0 & 0 & 0 \\
	 0 &-1 & 2 &-1 & 0 & 0 & 0 & 0 & 0 \\
	 0 & 0 &-1 & 2 &-1 & 0 & 0 & 0 &-1 \\
	 0 & 0 & 0 &-1 & 2 &-1 & 0 & 0 & 0 \\
	 0 & 0 & 0 & 0 &-1 & 2 &-1 & 0 & 0 \\
	 0 & 0 & 0 & 0 & 0 &-1 & 2 &-1 & 0 \\
	-1 & 0 & 0 & 0 & 0 & 0 &-1 & 2 & 0 \\
	 0 & 0 & 0 &-1 & 0 & 0 & 0 & 0 & 2 \
\end{pmatrix}
\ed
and
\bd
	p_9(x)=x^9-18x^8+136x^7-560x^6+1364x^5-1992x^4+1679x^3-730x^2+120x.
\ed
The polynomial $a_9(x)$ is given by
\bd
	a_9(x)=x^9-8x^7+20x^5-17x^3+4x,
\ed
with roots
\bd
	0, 1, -1, 2, -2, \frac{\sqrt{5}-1}{2}, \frac{-\sqrt{5}+1}{2},
	\frac{\sqrt{5}+1}{2},\frac{-\sqrt{5}-1}{2}
\ed
i.e.
\bd
2 \cos  \frac{ m_i \ \pi }{30}
\ed
where $m_i  \in \{0,6,10,12,15,18,20,24,30 \} $ are the affine exponents
for $E_8^{(1)}$ and $h=30$ is the affine Coxeter number.

The Coxeter polynomial is
\bd
	f_9(x)=x^9+x^8-x^6-x^5-x^4-x^3+x+1=\prod_{m_i}\left(x-e^{\frac{2 m_i \pi}{30}}\right)=
	\Phi_1^2 \Phi_2 \Phi_3 \Phi_5.
\ed

\subsection{\texorpdfstring{$F_4^{(1)}$}{F4} Graph}
The Cartan matrix for  $F_4^{(1)}$ is
\bd
\begin{pmatrix}
	 2 &-1 & 0 & 0 & 0 \\
	-1 & 2 &-2 & 0 & 0 \\
	 0 &-1 & 2 &-1 & 0 \\
	 0 & 0 &-1 & 2 &-1 \\
	 0 & 0 & 0 &-1 & 2 \\
\end{pmatrix}
\ed
with
\bd
	p_5(x)=x^5-10x^4+33x^3-38x^2+2x+12
\ed
and
\bd
	a_5(x)=x^5-5x^3+4x.
\ed
The roots of the polynomial $a_5(x)$ are
\bd
	0,1,-1,2,-2
\ed
i.e.
\bd
	2 \cos  \frac{ m_i \ \pi }{6},
\ed
where $m_i  \in \{0,2,3,4,6 \} $ are the affine exponents for $F_4^{(1)}$ and $h=6$ is the
affine Coxeter number.

The Coxeter polynomial is
\bd
	f_5(x)=x^5-x^3-x^2+1=\prod_{m_i}\left(x-e^{\frac{2 m_i \pi}{6}}\right)=
	\Phi_1^2 \Phi_2 \Phi_3.
\ed

\subsection{Cartan matrix of type \texorpdfstring{$G_2^{(1)}$}{G2}}

The Cartan matrix for  $G_2^{(1)}$ is
\bd
\begin{pmatrix}
	 2 &-1 & 0 \\
	-1 & 2 &-3 \\
	 0 &-1 & 2 \
\end{pmatrix} \
\ed
and the polynomial $p_3(x)$ is given by
\bd
	p_3(x)=x^3-6x^2+8x.
\ed
The polynomial $a_3(x)$ is
\bd
	a_3(x)=x^3-4x,
\ed
with roots
\bd
	0,2,-2
\ed
i.e.
\bd
	2 \cos  \frac{ m_i \ \pi }{2},
\ed
where $m_i  \in \{0,1,2 \} $.
These are the affine exponents for $G_2^{(1)}$ and the affine Coxeter
number is $h=2$.

The Coxeter polynomial is
\bd
	f_3(x)=x^3-x^2-x+1=\prod_{m_i}\left(x-e^{ m_i \pi}\right)=\Phi_1^2 \Phi_2.
\ed

\section{The \texorpdfstring{$A_n^{(1)}$}{An} case}

The aim of this section is to show that the formulas of Berman, Lee, Moody (see \cite{Moody})
and Steinberg (see \cite{Steinberg, Steko}), for the Coxeter polynomials can be modified
and applied to the case of $A_n^{(1)}$. In particular we show in propositions
\ref{Moody's Prop} and \ref{Steinberg's Prop} that these formulas can be used
for the explicit calculation of all the Coxeter polynomials for any affine Lie algebra.
Also we compute and list in table \ref{affinecoxetern} the affine exponents and affine Coxeter number
associated with each Coxeter polynomial of $A_n^{(1)}$.

First we fix some notation. Let $X_n^{(1)}$ be an affine Lie algebra with Cartan matrix $C$,
$\left\{\ga_1,\ga_2,\ldots,\ga_n\right\}$ a set of simple roots of the root system of type $X_n$
and $\ga_0$ minus the highest root of $X_n$.
Let $V=\mathbb R-\operatorname{span}\{\ga_0,\ga_1,\ga_2,\ldots,\ga_n\},
\ z=(z_0,z_1,\ldots,z_n)\in\mathbb Z^{n+1}$ the left zero eigenvector of
$C, \ \ga=\sum_{i=0}^nz_i\ga_i$ and $\tilde{V}=V/ \langle \ga\rangle$ as in \ref{affCoxeternumber}.
Let $\sigma=\sigma_{\pi(0)}\sigma_{\pi(1)}\sigma_{\pi(2)}\ldots\sigma_{\pi(n)}\in gl(V)
, \ \pi\in S_{n+1}$ be a Coxeter transformation of $X_n^{(1)}$. From the definition of the
simple reflections as $\sigma_j(\ga_i)=\ga_i-C_{i,j}\ga_j$ it follows that
$\sigma$ leaves $\ga$ invariant. Therefore $\sigma$ is defined on $\tilde{V}$ and
it has finite order. Its order is the affine Coxeter number $h$, associated with $\sigma$.

\subsection{Berman, Lee, Moody's method}

The defect map of the Coxeter transformation $\sigma$ is the map $\partial:V\rightarrow \mathbb{R}\ga$
defined by $\partial=Id_V-\sigma^h$. In \cite{Moody} Berman, Lee and Moody
consider all cases of affine Lie algebras where the Dynkin diagram is bipartite
and they show that for all
$i, \ \partial (\ga_i)\in c \mathbb Z\ga$ for some $c\in\mathbb N$
(for the case $A_n^{(1)}, n$ odd, they consider the defect map of the Coxeter transformation
corresponding to the largest conjugacy class).
Further, they prove that if $\beta$ is the branch root of $X$,
then $c$ is the least positive integer such that $cw_{\beta^\vee}$
belongs to the co-root lattice and $cw_{\beta^\vee}=\sum_{i=0}^nm_i\alpha_i^{\vee}$, where
$m_i$ are the affine exponents. We generalize this result to include all Coxeter transformations
corresponding to $A_n^{(1)}$, for $n$ both even and odd.

First we generalize the notion of a branch vertex of the Dynkin diagram of a simple Lie algebra
to the case of $A_n$.
\begin{definition}\label{Branch vertex general definition}
Let $\Gamma$ be a finite Dynkin diagram of type $X_n$ and $b:V(\Gamma)\rightarrow\mathbb N$ the
weight function defined in definition \ref{Branch vertex definition}. A vertex $r_i$ is said to be
a branch vertex of $\Gamma$ if $b$ attains its maximum value on $r_i$.
\end{definition}
Therefore, for the case of the Dynkin diagram of type $A_n$, all vertices are branch vertices
with $b(r_i)=2$ for all $i=1,2,\ldots,n$. Now we can extent Berman, Lee and Moody's result
to the case of $A_n^{(1)}$.

\begin{proposition}\label{Moody's Prop}
Let $\Gamma$ be the Dynkin diagram of the simple finite dimensional
Lie algebra of type $X_n$ and let $\beta=\ga_{i_0}$ be a root corresponding
to a branch vertex of $\Gamma$. If $c$ is the least positive
integer such that $cw_{\beta^\vee}$ belongs to the co-root lattice of $X_n$,
then $cw_{\beta^\vee}=\sum_{i=0}^nm_i\ga_i^\vee$, where $m_i$ are the affine exponents and
$m_{i_0}$  is the affine Coxeter number of $X_n^{(1)}$ associated with a Coxeter
polynomial of $X_n^{(1)}$.
\end{proposition}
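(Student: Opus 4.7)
The plan is to adapt the argument of Berman, Lee and Moody. For $X_n\neq A_n$ the proposition coincides with the theorem already proved in \cite{Moody}, so the genuine novelty lies in the case $X_n=A_n$. The outline is to reduce this remaining case to a direct calculation involving the fundamental coweight of the finite root system $A_n$ and then to match coefficients with the affine exponents recorded in table \ref{affinecoxetern}.

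I would first recall the defect-map machinery of \cite{Moody}. For any Coxeter transformation $\sigma$ of $X_n^{(1)}$ with affine Coxeter number $h$, the map $\partial=\operatorname{Id}_V-\sigma^h$ sends $V$ into $\mathbb{R}\ga$, because $\sigma$ fixes $\ga$ and has order $h$ on $\tilde{V}$. Writing $\partial(\ga_j)=m_j\ga$, Berman, Lee and Moody show that the integers $m_j$ are precisely the affine exponents and, under the Killing-form identification of $V$ with $V^*$, that $\sum_j m_j\ga_j^\vee=cw_{\beta^\vee}$. What remains is to verify this relation when $X_n=A_n$, for every choice of branch vertex $\beta=\ga_{i_0}$ and the associated conjugacy class of Coxeter elements.

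The fundamental coweight of $A_n$ at node $i_0$ is
\[
w_{\ga_{i_0}^\vee}=\sum_{k=1}^{n}\frac{\min(k,i_0)\bigl(n+1-\max(k,i_0)\bigr)}{n+1}\,\ga_k^\vee.
\]
A short computation identifies the least positive integer clearing the denominator as $c=(n+1)/\gcd(n+1,i_0)$, in which case $cw_{\ga_{i_0}^\vee}=\sum_{k=1}^{n}a_k\ga_k^\vee$ with
\[
a_k=\frac{k(n+1-i_0)}{\gcd(n+1,i_0)}\ \text{for }k\le i_0,\qquad a_k=\frac{(n+1-k)\,i_0}{\gcd(n+1,i_0)}\ \text{for }k\ge i_0.
\]
Since $cw_{\ga_{i_0}^\vee}$ already lies in the finite coroot lattice, we take $m_0=0$ and $m_k=a_k$ for $k\ge 1$. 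Setting $j=\min(i_0,n+1-i_0)$, the common value $a_{i_0}=i_0(n+1-i_0)/\gcd(n+1,i_0)$ is the affine Coxeter number $jk_j$ of the $j$-th class, while the remaining $a_k$ form two arithmetic progressions realising the lists $k_j,2k_j,\dots,jk_j$ and $n_j,2n_j,\dots,(n-j)n_j$ recorded in table \ref{affinecoxetern}.

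The main obstacle will be the combinatorial matching in the last step. The coweight formula produces the coefficients in their natural order along the vertices of the finite diagram, whereas table \ref{affinecoxetern} groups the affine exponents into two progressions corresponding to the two arcs of the cycle $A_n^{(1)}$. One must treat the cases $i_0\le(n+1)/2$ and $i_0>(n+1)/2$ separately, verify that the appropriate choice $j=\min(i_0,n+1-i_0)$ makes $k_j$ and $n_j$ match the step-sizes of $(a_1,\dots,a_{i_0})$ and $(a_{i_0},\dots,a_n)$, and confirm that the induced correspondence between branch vertices and conjugacy classes is well-defined. Once this bookkeeping is completed, the proposition follows.
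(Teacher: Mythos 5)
Your proposal is correct and follows essentially the same route as the paper: reduce to the case $X_n=A_n$ (citing Berman--Lee--Moody for the rest), compute the fundamental coweight at the chosen branch vertex, identify $c=(n+1)/\gcd(n+1,i_0)$, and match the resulting coefficients $a_k$ with the two arithmetic progressions of affine exponents for the conjugacy class with Coxeter polynomial $(x^{i_0}-1)(x^{n+1-i_0}-1)$. The only cosmetic difference is that you express $w_{\ga_{i_0}^\vee}$ via the inverse Cartan matrix, whereas the paper realizes $A_n$ explicitly in $\mathbb{R}^{n+1}$ and reads off the same coefficients from the vector $cw_{i_0}$.
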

\begin{proof}
We consider only the case $X_n=A_n$ since the other cases follow from Berman, Lee and Moody's result.

We realize $A_n$ as the set of vectors in $\mathbb R^{n+1}$ with length $\sqrt{2}$
and whose coordinates are integers and sum to zero.
The inner product is the usual inner product in $\mathbb R^{n+1}$.
One choice of  a basis of the root system of type $A_n$ is
$$
\ga_i=(\underbrace{0,0,\ldots,0,1}_{i \text{ terms}},-1,0,\ldots,0,0).
$$
The co-roots are $\ga_i^\vee=\frac{2\ga_i}{(\ga_i,\ga_i)}=\ga_i$ and the corresponding weights
are
$$
w_i=\frac{1}{n+1}(\underbrace{n+1-i,n+1-i,\ldots,n+1-i}_{i \text{ terms}},-i,-i,\ldots,-i).
$$
Let $d_{i}=\gcd(n+1-i,i)=\gcd(n+1,i)$. If we choose as branch root the root
$\beta=\ga_{i_0}$ (or $\beta=\ga_{n+1-i_0}$),
$i_0\in\left\{1,2,\ldots,\left\lfloor\frac{n+1}{2}\right\rfloor\right\}$
then the smallest positive integer
$c$ for which $cw_{\beta}$ belongs to the co-root lattice is $c=\dfrac{n+1}{d_{i_0}}$
(in the case where $n$ is odd and $i=\frac{n+1}{2}$ we have $c=2$. For the other cases $c>2$).
For that $c$ we have
\begin{gather*}
	cw_{\beta}=\left(\frac{n+1-i_0}{d_{i_0}},\frac{n+1-i_0}{d_{i_0}},\ldots,\frac{n+1-i_0}{d_{i_0}},
	-\frac{i_0}{d_{i_0}},-\frac{i_0}{d_{i_0}},\ldots,-\frac{i_0}{d_{i_0}}\right)\\
	=\frac{n+1-i_0}{d_{i_0}}\ga_1+2\frac{n+1-i_0}{d_{i_0}}\ga_2+\ldots+i_0\frac{n+1-i_0}{d_{i_0}}\ga_{i_0}+\\
	i_0\frac{n-i_0}{d_{i_0}}\ga_{i_0+1}+i_0\frac{n-1-i_0}{d_{i_0}}\ga_{i_0+2}+\ldots+\frac{i_0}{d_{i_0}}\ga_n.
\end{gather*}
The coefficients
$$
0,\frac{n+1-i_0}{d_{i_0}},2\frac{n+1-i_0}{d_{i_0}},\ldots,
i_0\frac{n+1-i_0}{d_{i_0}},i_0\frac{n-i_0}{d_{i_0}},
i_0\frac{n-1-i_0}{d_{i_0}},\ldots,\frac{i_0}{d_{i_0}}
$$
are precisely the affine exponents and $i_0\frac{n+1-i_0}{d_{i_0}}$ is the affine Coxeter number
corresponding to the Coxeter polynomial
$$
(x^{i_0}-1)(x^{n+1-i_0}-1).
$$
\end{proof}

We illustrate with two examples for the cases $A_4^{(1)}$ and $A_5^{(1)}$.
\begin{example}
In the case of the root system of type $A_4$
\vspace{2ex}

$
\put(25,20){\line(1,0){50}}
\put(85,20){\line(1,0){50}}
\put(145,20){\line(1,0){50}}
\put(20,20){\circle{10}}
\put(18,30){$1$}
\put(80,20){\circle{10}}
\put(78,30){$2$}
\put(140,20){\circle{10}}
\put(138,30){$3$}
\put(200,20){\circle{10}}
\put(198,30){$4$}
$

\noindent
the simple roots are
\begin{gather*}
	\ga_1=(1,-1,0,0,0),\ga_2=(0,1,-1,0,0),\ga_3=(0,0,1,-1,0),\ga_4=(0,0,0,1,-1).
\end{gather*}
If we choose $\ga_1$ (or $\ga_4$) as the branch root then
$$
w_1=\frac{1}{5}(4,-1,-1,-1,-1)
$$
and $5w_1=4\ga_1+3\ga_2+2\ga_3+\ga_4$. The affine Coxeter number is $4$ and the affine exponents
$0,1,2,3,4$ which give rise to the Coxeter polynomial $(x-1)(x^4-1)$.

If we choose $\ga_2$ (or $\ga_3$) as the branch root then
$$
w_2=\frac{1}{5}(3,3,-2,-2,-2)
$$
and $5w_1=3\ga_1+6\ga_2+4\ga_3+2\ga_4$. The affine Coxeter number is $6$ and the affine exponents
$0,2,3,4,6$ which give rise to the Coxeter polynomial $(x^2-1)(x^3-1)$.
\end{example}

\begin{example}
In the case of the root system of type $A_5$
\vspace{2ex}

$
\put(25,20){\line(1,0){50}}
\put(85,20){\line(1,0){50}}
\put(145,20){\line(1,0){50}}
\put(205,20){\line(1,0){50}}
\put(20,20){\circle{10}}
\put(18,30){$1$}
\put(80,20){\circle{10}}
\put(78,30){$2$}
\put(140,20){\circle{10}}
\put(138,30){$3$}
\put(200,20){\circle{10}}
\put(198,30){$4$}
\put(260,20){\circle{10}}
\put(258,30){$5$}
$

\noindent
the simple roots are
\[
\begin{split}
\ga_1=(1,-1&,0,0,0,0),\ga_2=(0,1,-1,0,0,0),\ga_3=(0,0,1,-1,0,0),\\
\ga_4&=(0,0,0,1,-1,0),\ga_5=(0,0,0,0,1,-1).
\end{split}
\]
The affine Coxeter number corresponding to the Coxeter polynomial $(x-1)(x^5-1)$ is $5$
and the affine exponents are $0,1,2,3,4,5$.
They correspond to the branch root $\ga_1$ (or $\ga_5$) for which the co-weight is
$$
w_1=\frac{1}{6}(5,-1,-1,-1,-1,-1)
$$
and $6w_1=5\ga_1+4\ga_2+3\ga_3+2\ga_4+\ga_5$.

The branch root $\ga_2$ (or $\ga_4$) corresponds to the Coxeter polynomial $(x^2-1)(x^4-1)$
which give rise to the affine Coxeter number $4$ and the affine exponents
$0,1,2,2,3,4$.

If we choose the middle root $\ga_3$, as the branch root then
$$
w_3=\frac{1}{2}(1,1,1,-1,-1,-1)
$$
and $2w_1=\ga_1+2\ga_2+3\ga_3+2\ga_4+\ga_5$.
The affine Coxeter number is $3$ and the affine exponents are
$0,1,2,3,2,1$ which give rise to the Coxeter polynomial $(x^3-1)^2$.
\end{example}

\subsection{Steinberg's method}
Steinberg in \cite{Steinberg} (see also \cite{Steko}) shows that for the affine root systems
considered in definition \ref{Branch vertex definition}, their Coxeter polynomial
is a product of Coxeter polynomials of type $A_i$. Removing the branch root,
if $g(x)$ is the Coxeter polynomial of the resulting root system then
$(x-1)^2g(x)$ is the Coxeter polynomial of $X_n^{(1)}$.

We generalize Steinberg's result to the case of root systems of type $A_n^{(1)}$.
\begin{proposition}\label{Steinberg's Prop}
Let $\beta$ be a branch root, as defined in definition \ref{Branch vertex general definition}
of a root system of type $X_n$. If $g(x)$ is the Coxeter polynomial
of the root system obtained by removing $\beta$, then $(x-1)^2g(x)$ is a
Coxeter polynomial of $X_n^{(1)}$.
\end{proposition}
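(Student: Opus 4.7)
The plan is to split on whether $X_n$ is of type $A_n$. For $X_n\neq A_n$, Definition \ref{Branch vertex general definition} singles out the same unique vertex as the classical Definition \ref{Branch vertex definition}, and the conclusion is exactly Steinberg's theorem recalled in Section 4; a citation to \cite{Steinberg} (and \cite{Steko} for the multiply laced cases) handles all such $X_n$ and leaves only the new case $X_n=A_n$ to address.

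For $X_n=A_n$, every vertex of the chain has weight $b=2$, so any simple root $\beta=\ga_{i_0}$ with $i_0\in\{1,\ldots,n\}$ qualifies as a branch root under Definition \ref{Branch vertex general definition}. Deleting $\ga_{i_0}$ disconnects the $A_n$ diagram into $A_{i_0-1}\sqcup A_{n-i_0}$, with the convention that $A_0$ is empty and contributes the Coxeter polynomial $1$. Since the Coxeter element of $A_k$ is realised by a single $(k+1)$-cycle in $S_{k+1}$ acting on its reflection representation, its characteristic polynomial is $(x^{k+1}-1)/(x-1)$, so that
\[
g(x)=\frac{x^{i_0}-1}{x-1}\cdot\frac{x^{n-i_0+1}-1}{x-1}
\]
and therefore
\[
(x-1)^2 g(x)=(x^{i_0}-1)(x^{n+1-i_0}-1).
\]

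Identifying this with a Coxeter polynomial of $A_n^{(1)}$ is now immediate from Coleman's classification \cite{Coleman}, restated in Table \ref{CoxeterA_n} and in the subsection on associated polynomials for $A_n^{(1)}$: the Coxeter polynomials of $A_n^{(1)}$ are precisely
\[
(x^{i}-1)(x^{n+1-i}-1), \qquad i=1,2,\ldots,\left\lfloor\tfrac{n+1}{2}\right\rfloor,
\]
and our expression is the one indexed by $i=\min(i_0,\,n+1-i_0)$.

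The main obstacle is essentially bookkeeping: one must check the boundary values $i_0\in\{1,n\}$ where one side degenerates to $A_0$, and note that $i_0$ and $n+1-i_0$ produce the same conjugacy class of Coxeter elements in $A_n^{(1)}$. Neither is delicate; the substance of the proposition is the observation that the specialisation of Steinberg's construction to $A_n$ reproduces Coleman's list precisely, which is exactly the previously unconsidered ingredient needed to extend Steinberg's method to $A_n^{(1)}$.
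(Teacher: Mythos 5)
Your proof is correct and follows essentially the same route as the paper: invoke Steinberg (and Stekolshchik) for $X_n\neq A_n$, and for $A_n$ compute that removing $\ga_{i_0}$ leaves $A_{i_0-1}\times A_{n-i_0}$ with Coxeter polynomial $g(x)$ satisfying $(x-1)^2g(x)=(x^{i_0}-1)(x^{n+1-i_0}-1)$, which Coleman's classification identifies as a Coxeter polynomial of $A_n^{(1)}$. Your added bookkeeping (the $A_0$ boundary case, the symmetry $i_0\leftrightarrow n+1-i_0$) is sound, and your final formula in fact corrects a small exponent slip in the paper's own displayed identity.
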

\begin{proof}
For $X_n\neq A_n$ we have Steinberg's theorem. For $X_n=A_n$ if we take as branch root
$\beta=\ga_{i_0}, \ i_0\in\left\{1,2,\ldots,\left\lfloor\frac{n+1}{2}\right\rfloor\right\}$,
then the root system obtained by removing $\beta$ is $A_{i_0-1}\times A_{n-i_0}$ with Coxeter
polynomial
$$
g(x)=\left(x^{i_0-1}+x^{i_0-2}+\ldots+x+1\right)\left(x^{n-i_0}+x^{n-i_0-1}+\ldots+x+1\right).
$$
Then $(x-1)^2g(x)=\left(x^{i_0}-1\right)\left(x^{n-i_0}-1\right)$ is
one of the Coxeter polynomials of $A_n^{(1)}$
\end{proof}

\begin{example}
Case $A_4^{(1)}$

\noindent
If we choose $\ga_1$ (or $\ga_4$) as the branch root then the root system
obtained by removing $\ga_1$ is $A_3$ with Coxeter polynomial $x^3+x^2+x+1$.
We obtain the Coxeter polynomial $(x-1)^2(x^3+x^2+x+1)=(x-1)(x^4-1)$.

If we choose $\ga_2$ (or $\ga_3$) as the branch root then the root system
obtained by removing the branch root is $A_1\times A_2$ with Coxeter polynomial
$(x+1)(x^2+x+1)$. The corresponding Coxeter polynomial is $(x-1)^2(x+1)(x^2+x+1)=(x^2-1)(x^3-1)$.
\end{example}
%
%
%
%
\section{Conclusion}

We have  computed  the affine exponents and the affine Coxeter number of affine Lie algebras
using various techniques, i.e., from the spectrum of Cartan and adjacency matrices,
with the help of Chebyshev polynomials,
using the procedure of Steinberg, and also the method of Berman, Lee and Moody. We payed special attention to the non-bipartite case which is not considered elsewhere and we made use of Chebyshev polynomials. 

We can summarize with the following five characterizations.

\begin{enumerate}

\item Let $f(x)$ be the affine Coxeter polynomial
(in the case of $A_n^{(1)}$ with $n$ odd we use the Coxeter polynomial
corresponding to the largest conjugacy class). Then the roots of $f$
in terms of the exponents and Coxeter number are
\bd
	e^{\frac{2m_j \pi i }{h}}  \  .
\ed

\item Let $a_n(x)$ be the characteristic polynomial of the Coxeter adjacency matrix.
The spectrum of this polynomial is called the spectrum of the Dynkin graph.
Using the knowledge of the roots of $U_n(x)$, the Chebyshev polynomial of the second kind,
we are able to compute the roots of $a_n(x)$ and in the bipartite case
they turn out to be
$$
2 \cos{\frac{m_j \pi}{h}}    \ .
$$

\item  Let $C$ be the generalized Cartan matrix associated with the affine Lie algebra.
The  eigenvalues of $C$  in the bipartite case are
\bd
4 \cos^2{\frac{m_j\pi}{2h}}    \  .
\ed

\item Let $\Pi=\{\ga_1,\ga_2,\ldots,\ga_{n}\}$ be the simple roots of the associated
simple Lie algebra,  $V=\mathbb{R}\operatorname{-span}(\ga_1,\ga_2,\ldots,\ga_{n})$
and $\beta$ the branch root. Let $w_{\beta^\vee} \in V^*$ be the weight
corresponding to the co root $\beta^\vee$.
Then for some $c \in \mathbb N$ and a proper enumeration of $m_j$ we have
$$
c\cdot w_{\beta^\vee}=\displaystyle{\sum_{j=1}^{n}m_j\ga_j^\vee} \ ,
$$
where $c \in \mathbb{N}$ is the smallest integer such that $c\cdot w_{\beta^\vee}$
belongs to the co root lattice. The coefficient of $\beta^\vee$ is the affine
Coxeter number. This method is extended in this paper for each conjugacy class
in the Coxeter group of $A_n^{(1)}$.

\item One may use a procedure of Steinberg  which relates affine Coxeter polynomials
with the corresponding Coxeter polynomial of the reduced system
obtained by removing a branch root. Each affine Coxeter polynomial is a
product of Coxeter polynomials of type $A_n$. This method is also extended to the case of
$A_n^{(1)}$.

\end{enumerate}

\end{document}